\newtheorem{theorem}{Theorem}[section]
\newtheorem{prop}[theorem]{Proposition}
\newtheorem{corollary}[theorem]{Corollary}
\newtheorem{lem}[theorem]{Lemma}
\theoremstyle{definition}
\newcommand{\real}[0]{\mathbb{R}}
\newcommand{\p}[0]{\partial}
\newcommand{\pn}[0]{\partial^-}
\newcommand{\pp}[0]{\partial^+}
\DeclareMathOperator{\Span}{span}
\theoremstyle{remark}
\newtheorem{remark}[theorem]{Remark}
\title{A New Proper Orthogonal Decomposition Method with Second Difference Quotients for the Wave Equation}
\begin{document}
	
	\author{Andrew Janes%
		\thanks{Department of Mathematics and Statistics, Missouri University of Science and Technology, Rolla, MO (\mbox{acjgcc@umsystem.edu}, \mbox{singlerj@mst.edu}).}
		\and
		John~R.~Singler%
		\footnotemark[1]
	}
	\maketitle
	
	\begin{abstract}
		Recently, researchers have investigated the relationship between proper orthogonal decomposition (POD), difference quotients (DQs), and pointwise in time error bounds for POD reduced order models of partial differential equations. In a recent work (Eskew and Singler, Adv.\ Comput.\ Math., 49, 2023, no. 2,\ Paper No.\ 13), a new approach to POD with DQs was developed that is more computationally efficient than the standard DQ POD approach and it also retains the guaranteed pointwise in time error bounds of the standard method. In this work, we extend this new DQ POD approach to the case of second difference quotients (DDQs). Specifically, a new POD method utilizing DDQs and only one snapshot and one DQ is developed and used to prove ROM error bounds for the damped wave equation. This new approach eliminates data redundancy in the standard DDQ POD approach that uses all of the snapshots, DQs, and DDQs. We show that this new DDQ approach also has pointwise in time data error bounds similar to DQ POD and use it to prove pointwise and energy ROM error bounds. We provide numerical results for the POD errors and ROM errors to demonstrate the theoretical results. We also explore an application of POD to simulating ROMs past the training interval for collecting the snapshot data for the standard POD approach and the DDQ POD method.
	\end{abstract}
	
	\textbf{Keywords:} proper orthogonal decomposition, wave equation, second difference quotients, reduced order models
	
\section{Introduction}

Simulation of high dimensional systems, often times based on partial differential equations (PDEs), is of great importance to applied computational research as well as industry related problems on fluids, heat, and control theory. Often times it is possible to compute the solutions to these high dimensional problems but this requires long computation times. Model order reduction allows for these high dimensional systems to be represented by a low order approximation while still retaining the desired accuracy. Reduced order models (ROMs) can be formed in various ways, but a common technique is proper orthogonal decomposition (POD). POD ROMs are useful for forming accurate low order systems efficiently. Example applications of model order reduction can be found in, e.g., \cite{Batten, Reyes, Balajewicz, Lee, Alla, Bergmann1, Gras1, Bergmann2, Gras2, Georgiou, Amsallem1, Kunisch, Rehm, Sun, Sun2021, Ma2020}. 

The wide appeal of POD in applied research has led numerous researchers to study the numerical analysis aspects of POD ROMs; see, e.g., \cite{Herkt, Amsallem2, Sarahs, Koc1, Koc2, Kostova2018, kunisch2, garcia1, garcia2, garcia3, garcia4, Xie2018, Alla, Kunisch, Singler, Iliescu}. Due to the widespread use of POD in application problems, understanding the sizes of errors involved in using the ROM is extremely important. When simulating PDEs, two types of errors are introduced: spatial discretization error and time discretization error. Research on the two PDE discretization errors is numerous. The ROM introduces a new error: the ROM discretization error. The introduction of the ROM changes how the time discretization error behaves as well so it is common for POD based papers to consider only the time and ROM discretization errors and leave the spatial discretization error to be studied using current methods.

The POD ROM discretization error depends on the method used to construct the POD modes from the data. Koc et al.~\cite{Koc1} recently proved that the standard approach to POD using only the data snapshots does not have pointwise error bounds for the data, while including the difference quotients (DQs) in the snapshots yields pointwise error bounds. Researchers have recently used pointwise error bounds for POD with DQs to analyze DQ POD ROM errors for parabolic PDEs; see, e.g., \cite{Koc1, garcia1, garcia2, garcia2, Koc2, Sarahs}.

In \cite{Herkt, Amsallem2}, researchers derived sum of squares error bounds for the wave equation using all of the snapshots, DQs, and DDQs and the Newmark scheme for the time iteration. Here, we extend DQ POD work in \cite{Sarahs} to DDQs and remove the redundancy in that data set by developing a method using only one snapshot, one DQ, and all of the DDQs. We then develop pointwise data error bounds similar to the bounds shown in \cite{Sarahs}. We also prove pointwise in time energy and \(L^2\) error bounds for a damped wave equation using a simple time stepping scheme. 

We begin with a brief review of the DQ POD method in \cite{Sarahs} that uses only one regular snapshot and all DQs. We present our new extension to DDQs, new results on the POD data approximation errors, and the POD ROM error analysis for a damped wave equation with the new DDQ POD approach. We present numerical results involving the POD data errors and ROM energy and pointwise errors. We also explore changing the training interval for collecting the data to create the POD modes. We compute the final time errors between the finite element solution and the POD ROM solution when the final time lies outside of the training interval. This is of interest as a primary application of a POD ROM is to simulate an equation into the future. 

\section{Proper Orthogonal Decomposition}

Proper Orthogonal Decomposition is a method of reducing the amount of information required to represent a data set. We aim to find a basis that can approximate the data in a way that minimizes a certain error. This optimization forms the basic POD problem. How do we optimally find a basis to minimize the error between our new POD approximate data and the actual data? The core difference between the different POD approaches is how we choose the error we minimize. This choice is often guided by the structure of the problem we aim to use POD on. 

In this section, we briefly review POD following the exposition in \cite{Sarahs}. This section introduces two methods of POD. The first method is the standard approach to POD using only the data snapshots, and the second method is from \cite{Sarahs} and uses one snapshot and all of the difference quotients for the data. The standard POD approach is well known, and details can be found in many references, such as \cite{Koc1, kunisch2}. The recent difference quotient approach from \cite{Sarahs} has been explored and generalized further in \cite{garcia1, garcia2, garcia3}. 

\subsection{Notation}
First, we establish some general notation and define a few key objects. Throughout this work, \(X\) and \(Y\) are separable Hilbert Spaces; for the specific PDE we consider, we often take these spaces to be either \(L^2(\Omega)\) or \(H^1_0(\Omega)\) where \(\Omega\) is a spatial domain. The Hilbert space \(X\) is called the POD space. Let \(M\) be a positive integer. Then the weighted inner product on the space \(S := \mathbb{R}^M_\Gamma\) is defined by \begin{equation*}
    (g,h)_S = h^*\Gamma g = \sum_{j = 1}^M \gamma_j g^j h^j
\end{equation*}
where \(g,h \in S\), \(\Gamma = \mathrm{diag}(\gamma_1, \gamma_2, \ldots, \gamma_M)\), and each \(\gamma_j\) is positive for \(j = 1, \ldots, M\). The constants \(\{\gamma_j\}^M_{j=1}\) are often chosen to approximate a time integral or constant multiple of a time integral. 

For POD reduced order modeling, we consider data sets formed by a finite element (FE) solution of a time dependent PDE. For the data we consider a training interval of \([0,T_t]\) and a testing interval of \([0,T]\) where \(T \geq T_t > 0\). The training interval is the interval of time we take snapshots from the FE solution and the testing interval is the time interval on which we simulate the POD ROM. For the training data we have the FE solution data at times \(t_n = (n-1)\Delta t\) for \(n = 1, \ldots, N\), where \(N > 0\) and \(\Delta t = \frac{T_t}{N-1}\). Unless otherwise stated, \(T_t = T\) and we work with the testing and training intervals being the same. 

An important part of POD is the use of projections. Let \(Z\) be a normed space and let \(Z_r \subset Z\) be a subspace. The bounded linear operator \(\Pi: Z \to Z\) is a projection onto \(Z_r\) if \(\Pi^2 = \Pi\) and \(\mathrm{range} (\Pi) = Z_r\). We then have \(\Pi z = z\) for \(z \in Z_r\). The projections in this work are not required to be orthogonal unless stated otherwise. 

For convenience in presenting POD results when using the DQs and DDQs, we introduce the following notation. For \(\{z^j\}_{j=1}^M \subset Z\), define
\begin{equation*}
    \p z^j = \pp z^j = \frac{z^{j+1} - z^j}{\Delta t},  \qquad  \pn z^j = \frac{z^{j} - z^{j-1}}{\Delta t},
\end{equation*}
%\begin{equation*}
%    \pn z^j = \frac{z^{j} - z^{j-1}}{\Delta t},
%\end{equation*}
and
\begin{equation*}
    \p\p z^j = \pp \pn z^j = \pn\pp z^j = \frac{z^{j+1} - 2z^j + z^{j-1}}{\Delta t^2}.
\end{equation*}
We use the \(\p\) notation for the forward difference as this is the DQ form we use and appears most often in results. In results where the backwards difference appears we use \(\pn\) for the operator. Finally, the \(\p\p\) notation is used for convenience and visual clarity and should not be interpreted as \(\pp\pp\). 

\subsection{Standard POD}\label{sec:2_StPOD}

\qquad We begin by introducing the standard POD problem and operator. 
Let \(W = \{w^j\}^N_{j=1} \subset X\) be the POD data, called the snapshots, for some integer \(N > 0\). Given \(r>0\), the standard POD problem is to find an orthonormal basis \(\{\varphi_k\}^N_{k=1} \subset X\), called the POD basis, minimizing the data approximation error 
\begin{equation} \label{eq:2_StPODErrorEq}
    E_r = \sum_{j=1}^N \gamma_j \|w^j - \Pi^X_r w^j \|^2_X,
\end{equation}
where \(\Pi^X_r:X\to X\) is the orthogonal projection onto \(X_r = \Span\{\varphi_k\}^r_{k=1}\) defined by
\begin{equation} \label{eq:2_L2Proj}
    \Pi^X_r x = \sum_{k=1}^r (x, \varphi_k)_X \varphi_k.
\end{equation}
The POD operator that provides the solution to this problem is \(K:S \to X\)
\begin{equation} \label{eq:2_StPODop}
    Kf = \sum_{j=1}^N \gamma_j f^jw^j, \quad f = [f^1, f^2, \cdots, f^N]^T.
\end{equation}
The operator \( K \)  is called the standard POD operator. It is compact and has a singular value decomposition with \(\{\lambda_k^{1/2}, f_k, \varphi_k\} \subset \real \times S\times X\), where \(\{\lambda_k^{1/2}\}\) are the singular values and \(\{f_k\}\) and \(\{\varphi_k\}\) are the orthonormal singular vectors. Furthermore, we call \(\{\varphi_k\}\) the POD modes of the data and \(\{\lambda_k^{1/2}\}\) the POD singular values. 

We know that the the POD modes give the best low rank approximation to the data, and the standard data error formula is  
\begin{equation} \label{eq:2_StPODErrorFormula}
    E_r = \sum_{j=1}^N \gamma_j\|w^j-\Pi_r^X w^j\|^2_X = \sum_{k=r+1}^s \lambda_k 
\end{equation}
where \(s\) is the number of positive POD singular values. 

The \( \{ \gamma_j \} \) are positive weights that must be specified. They can be selected so that the data approximation error approximates a time integral. It is useful to leave the weights in a general form here so that they may be varied for later POD methods. 

The following lemma states error formulas for norms and projections other than the standard POD norm and projection.
\begin{lem}[Standard POD Extended Data Error Formulas, {\cite[Lemma 1]{Sarahs}}]\label{lem:2_StPODExtendedErrors}
Let \; \(W = \{w^j\}_{j=1}^N\) be the snapshots, \(X_r = \Span\{\varphi_k\}_{k=1}^r\), and \(\Pi_r^X:X\to X\) be the orthonormal projection onto \(X_r\). Let \(s\) be the number of positive POD singular values for \(K\) defined in Equation \eqref{eq:2_StPODop}. If \(Y\) is a Hilbert space with \(W \subset Y\) then 
\begin{equation} \label{eq:2_StPODErrorY}
    \sum_{j=1}^N \gamma_j\|w^j-\Pi_r^X w^j\|^2_Y = \sum_{k=r+1}^s \lambda_k \|\varphi_k\|^2_Y.
\end{equation}
In addition if \(\pi_r:Y\to Y\) is a bounded linear projection onto \(X_r\) then
\begin{equation} \label{eq:2_StPODErrorYpi_r}
    \sum_{j=1}^N \gamma_j\|w^j-\pi_r w^j\|^2_Y = \sum_{k=r+1}^s \lambda_k \|\varphi_k - \pi_r \varphi_k\|^2_Y.
\end{equation}
\end{lem}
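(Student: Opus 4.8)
The plan is to reduce both identities to a single spectral expansion of the snapshots in terms of the POD modes and left singular vectors, after which each formula follows by expanding the relevant $Y$-norm and invoking orthonormality in the appropriate space. First I would identify the adjoint $K^*:X\to S$: writing out $(Kf,x)_X=\sum_{j=1}^N\gamma_j f^j(w^j,x)_X$ and comparing with $(f,K^*x)_S=\sum_{j=1}^N \gamma_j f^j (K^*x)^j$ shows that $(K^*x)^j=(w^j,x)_X$. Combining the singular value relation $Kf_k=\lambda_k^{1/2}\varphi_k$ with the action of $K$ on the standard basis vector $e_j\in S$, namely $Ke_j=\gamma_j w^j$, and using $(e_j,f_k)_S=\gamma_j (f_k)^j$, I obtain the key snapshot expansion
\[
w^j=\sum_{k=1}^s \lambda_k^{1/2}(f_k)^j\,\varphi_k,\qquad j=1,\dots,N .
\]
Equivalently, pairing this with $\varphi_k$ in $X$ and using $X$-orthonormality of the modes gives $(w^j,\varphi_k)_X=\lambda_k^{1/2}(f_k)^j$.

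For the first formula, I would apply $\Pi_r^X$ to the expansion. Since $\Pi_r^X$ is the $X$-orthogonal projection onto $X_r=\Span\{\varphi_k\}_{k=1}^r$, the first $r$ terms are reproduced and
\[
w^j-\Pi_r^X w^j=\sum_{k=r+1}^s \lambda_k^{1/2}(f_k)^j\,\varphi_k .
\]
Expanding the squared $Y$-norm produces a double sum over $k,l$ with coefficient $\lambda_k^{1/2}\lambda_l^{1/2}(f_k)^j(f_l)^j(\varphi_k,\varphi_l)_Y$; multiplying by $\gamma_j$ and summing over $j$ collapses the $j$-sum via $\sum_{j=1}^N\gamma_j (f_k)^j (f_l)^j=(f_k,f_l)_S=\delta_{kl}$, the orthonormality of the $f_k$ in $S$. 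Only the diagonal terms survive, yielding $\sum_{k=r+1}^s\lambda_k\|\varphi_k\|_Y^2$.

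The second formula follows the same template, with the one change that makes it work: because $\pi_r$ is a (not necessarily orthogonal) projection onto $X_r$, we still have $\pi_r\varphi_k=\varphi_k$ for $k\le r$, so applying $I-\pi_r$ to the snapshot expansion again annihilates the first $r$ terms and leaves $w^j-\pi_r w^j=\sum_{k=r+1}^s\lambda_k^{1/2}(f_k)^j(\varphi_k-\pi_r\varphi_k)$. Expanding $\|\cdot\|_Y^2$ and collapsing the $j$-sum against $\delta_{kl}$ exactly as before gives $\sum_{k=r+1}^s\lambda_k\|\varphi_k-\pi_r\varphi_k\|_Y^2$. The main obstacle is really the bookkeeping in the first step: keeping the weighted inner product $(\cdot,\cdot)_S$ and the adjoint consistent so that the snapshot expansion comes out in the clean biorthogonal form above. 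Once that expansion is in hand, both results are immediate from the orthonormality of $\{f_k\}$ in $S$ together with the projection identity $\pi_r\varphi_k=\varphi_k$ for $k\le r$, and neither step uses any property of the $Y$-norm beyond bilinearity of its inner product.
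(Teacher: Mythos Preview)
Your argument is correct. The paper does not actually prove this lemma; it is stated with a citation to \cite[Lemma 1]{Sarahs} and no proof is given in the present work, so there is nothing to compare against directly. Your approach---deriving the snapshot expansion $w^j=\sum_{k=1}^s\lambda_k^{1/2}(f_k)^j\varphi_k$ from the SVD of $K$, then applying $I-\Pi_r^X$ (respectively $I-\pi_r$), expanding the $Y$-norm, and collapsing the cross terms via the $S$-orthonormality $\sum_j\gamma_j(f_k)^j(f_l)^j=\delta_{kl}$---is the standard route and matches the proof in the cited reference. One small point worth making explicit: the identity $\varphi_k\in Y$ (needed so that $\pi_r\varphi_k$ and $\|\varphi_k\|_Y$ make sense) follows because each $\varphi_k$ lies in the range of $K$, hence in $\Span\{w^j\}\subset Y$.
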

%\begin{proof}
%    The proof is done  Reference [Insert Citation, 28,Lemma 2.2] of Sarah's paper
%\end{proof}
This standard method for POD does not have general pointwise error bounds, shown in \cite{Koc1}.

\subsection{POD with 1st Difference Quotients} \label{sec:2_DQPOD}

The following method for POD was proposed in \cite{Sarahs}, extending on the work done in \cite{Koc1}, and has general pointwise error bounds. In this approach, we consider the first data snapshot and all of the difference quotients for the data, defined as the forward difference: 
\begin{equation} \label{eq:2_DQDef}
    \p u^j = \frac{u^{j+1}-u^j}{\Delta t}.
\end{equation}
Then for the data \(U = \{u^j\}_{j=1}^N\), the error we aim to minimize is
\begin{equation} \label{eq:2_DQPODErrorEq}
    E_r^{\mathrm{DQ1}} = \|u^1 - \Pi_r^X u^1\|^2_X + \sum_{j=1}^{N-1} \Delta t\|\p u^j - \Pi_r^X \p u^j\|^2_X.
\end{equation}
This error can be found with the POD operator:
\begin{equation} \label{eq:2_DQPODop}
    K_1f = f^1u^1 + \sum_{j=1}^{N-1} \Delta t f^{j+1}\p u^j.
\end{equation}
Here \(K_1f = Kf\) where \(w^1 = u^1\) and \(w^{j+1} = \p u^j\) for \(j = 1, \ldots, N-1\) with \(\gamma_1 = 1\) and \(\gamma_j = \Delta t\) for \(j = 2,\ldots, N\). With \(\{\lambda_j^{\mathrm{DQ1}}\}_{j=1}^N\) as the POD eigenvalues and \(\{\varphi_k\}_{k=1}^r\) as the POD modes for the data, the following lemma provides error formulas for the data approximation. 

\begin{lem}[DQ1 POD Extended Data Error Formulas, {\cite[Lemma 5]{Sarahs}}] \label{lem:2_DQPODErrorFormulas}
Let \(U = \{u^j\}_{j=1}^N\) be the snapshots, \(X_r = \Span\{\varphi_k\}_{k=1}^r\), and \(\Pi_r^X:X\to X\) be the orthonormal projection onto \(X_r\). Let \(s\) be the number of positive POD singular values for \(K_1\) defined in Equation \eqref{eq:2_DQPODop}. Then
\begin{equation} \label{eq:2_DQPODError}
    \|u^1 - \Pi_r^X u^1\|^2_X + \sum_{j=1}^{N-1} \Delta t\|\p u^j - \Pi_r^X \p u^j\|^2_X = \sum_{k=r+1}^s \lambda_k^{\mathrm{DQ1}}. 
\end{equation}
If \(Y\) is a Hilbert space with \(W \subset Y\) then 
\begin{equation} \label{eq:2_DQPODErrorY}
    \|u^1 - \Pi_r^X u^1\|^2_Y + \sum_{j=1}^{N-1} \Delta t\|\p u^j - \Pi_r^X \p u^j\|^2_Y = \sum_{k=r+1}^s \lambda_k^{\mathrm{DQ1}}\|\varphi_k\|^2_Y. 
\end{equation}
In addition if \(\pi_r:Y\to Y\) is a bounded linear projection onto \(X_r\) then
\begin{equation} \label{eq:2_DQPODErrorYpi_r}
    \|u^1 - \pi_r u^1\|^2_Y + \sum_{j=1}^{N-1} \Delta t\|\p u^j - \pi_r \p u^j\|^2_Y = \sum_{k=r+1}^s \lambda_k^{\mathrm{DQ1}}\|\varphi_k - \pi_r \varphi_k\|^2_Y.
\end{equation}
\end{lem}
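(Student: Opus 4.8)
The plan is to prove all three identities at once by recognizing that the DQ1 POD operator is nothing more than the standard POD operator applied to a relabeled snapshot set, and then quoting the Standard POD Extended Data Error Formulas of Lemma~\ref{lem:2_StPODExtendedErrors}. The key observation, already recorded just before the statement, is that $K_1 f = K f$ once we set $w^1 = u^1$ and $w^{j+1} = \p u^j$ for $j = 1, \ldots, N-1$, with weights $\gamma_1 = 1$ and $\gamma_j = \Delta t$ for $j = 2, \ldots, N$. Since $K_1$ and $K$ are the same operator under this identification, they share a single singular value decomposition: the DQ1 eigenvalues $\lambda_k^{\mathrm{DQ1}}$ coincide with the eigenvalues $\lambda_k$ of the relabeled standard problem, the POD modes $\varphi_k$ are identical, and the number $s$ of positive singular values is the same.

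With this identification in hand, I would apply in turn the standard error formula \eqref{eq:2_StPODErrorFormula} and the two extended formulas \eqref{eq:2_StPODErrorY} and \eqref{eq:2_StPODErrorYpi_r} of Lemma~\ref{lem:2_StPODExtendedErrors} to the relabeled data. In each case the left-hand side is a weighted sum $\sum_{j=1}^N \gamma_j \| w^j - P w^j \|^2$, where $P$ denotes $\Pi_r^X$ for \eqref{eq:2_StPODErrorFormula} and \eqref{eq:2_StPODErrorY} and $\pi_r$ for \eqref{eq:2_StPODErrorYpi_r}. This sum splits according to the weights: the $j = 1$ term carries weight $\gamma_1 = 1$ and equals $\| u^1 - P u^1 \|^2$, while the terms $j = 2, \ldots, N$ each carry weight $\Delta t$ and, after shifting the summation index back to $j = 1, \ldots, N-1$, reassemble into $\sum_{j=1}^{N-1} \Delta t \| \p u^j - P \p u^j \|^2$. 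Carrying the substitution $\lambda_k = \lambda_k^{\mathrm{DQ1}}$ through to the right-hand sides then yields \eqref{eq:2_DQPODError}, \eqref{eq:2_DQPODErrorY}, and \eqref{eq:2_DQPODErrorYpi_r}, respectively.

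The only content of the argument is the recognition that $K_1$ is a special case of $K$; once that is established, everything follows by substitution and reindexing, and I therefore expect no genuine analytic obstacle, since Lemma~\ref{lem:2_StPODExtendedErrors} already carries all of the spectral-theoretic weight. The one place requiring care is purely bookkeeping: the difference quotients are naturally indexed $j = 1, \ldots, N-1$ but occupy snapshot slots $2, \ldots, N$, so the index shift must be tracked consistently, and the hypothesis that the snapshots lie in $Y$ must be read here as the requirement that $u^1$ and every $\p u^j$ belong to $Y$, which is exactly what makes the $Y$-norm right-hand sides of \eqref{eq:2_DQPODErrorY} and \eqref{eq:2_DQPODErrorYpi_r} well defined.
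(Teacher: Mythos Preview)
Your proposal is correct and matches the paper's approach exactly. The paper does not supply its own proof of this lemma (it is cited from \cite{Sarahs}), but the analogous Lemma~\ref{lem:3_DDQPODErrorFormulas} for the DDQ case is proved in precisely the way you describe: one observes that the operator is a relabeled instance of the standard POD operator and then invokes Equation~\eqref{eq:2_StPODErrorFormula} and Lemma~\ref{lem:2_StPODExtendedErrors}.
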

The following lemma was used in \cite{Sarahs} to prove pointwise error formulas and will be used to prove new error bounds in Section \ref{sec:3_DDQ}.
\begin{lem}[General Pointwise Norm Bounds, {\cite[Lemma 6]{Sarahs}}] \label{lem:2_DQPointwiseErrorBounds} 
Let \(T > 0\), \(Z\) be a normed space, \(\{z^j\}_{j=1}^N \subset Z\), and \(\Delta t = T/(N-1)\). Then 
\begin{equation} \label{eq:2_z_DQpwErrorBound}
    \max_{1\leq j \leq N}\|z^j\|^2_Z \leq C_1\left(\|z^1\|^2_Z + \sum_{\ell=1}^{N-1} \Delta t \|\p z^\ell\|^2_Z\right)
\end{equation}
where \(C_1 = 2\max\{T, 1\}\).
\end{lem}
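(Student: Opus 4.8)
The plan is to bound each $\|z^j\|_Z$ by expressing $z^j$ as a telescoping sum involving $z^1$ and the difference quotients, then control that sum with the Cauchy--Schwarz inequality. First I would write, for any $1 \le j \le N$,
\begin{equation*}
    z^j = z^1 + \sum_{\ell=1}^{j-1} (z^{\ell+1} - z^\ell) = z^1 + \Delta t \sum_{\ell=1}^{j-1} \p z^\ell,
\end{equation*}
since $z^{\ell+1} - z^\ell = \Delta t\, \p z^\ell$ by the definition of the forward difference quotient. Applying the triangle inequality and then the elementary bound $(a+b)^2 \le 2a^2 + 2b^2$ gives
\begin{equation*}
    \|z^j\|_Z^2 \le 2\|z^1\|_Z^2 + 2\left(\Delta t \sum_{\ell=1}^{j-1} \|\p z^\ell\|_Z\right)^2.
\end{equation*}

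Next I would estimate the sum term. Treating $\Delta t \sum_{\ell=1}^{j-1} \|\p z^\ell\|_Z$ as an inner product of the constant vector $\mathbf{1}$ with the vector $(\|\p z^\ell\|_Z)_\ell$ in $\mathbb{R}^{j-1}$ weighted by $\Delta t$, the Cauchy--Schwarz inequality yields
\begin{equation*}
    \left(\Delta t \sum_{\ell=1}^{j-1} \|\p z^\ell\|_Z\right)^2 \le \left(\Delta t \sum_{\ell=1}^{j-1} 1\right)\left(\Delta t \sum_{\ell=1}^{j-1} \|\p z^\ell\|_Z^2\right).
\end{equation*}
The first factor is $(j-1)\Delta t \le (N-1)\Delta t = T$, and the second factor is bounded by $\sum_{\ell=1}^{N-1} \Delta t\,\|\p z^\ell\|_Z^2$. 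Combining these estimates produces
\begin{equation*}
    \|z^j\|_Z^2 \le 2\|z^1\|_Z^2 + 2T\sum_{\ell=1}^{N-1}\Delta t\,\|\p z^\ell\|_Z^2.
\end{equation*}

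Finally, I would factor out the common constant. Bounding $2 \le 2\max\{T,1\}$ and $2T \le 2\max\{T,1\}$ lets me write both terms with the single constant $C_1 = 2\max\{T,1\}$, giving
\begin{equation*}
    \|z^j\|_Z^2 \le C_1\left(\|z^1\|_Z^2 + \sum_{\ell=1}^{N-1}\Delta t\,\|\p z^\ell\|_Z^2\right).
\end{equation*}
Since the right-hand side is independent of $j$, taking the maximum over $1 \le j \le N$ preserves the inequality and yields the claim.

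I expect no serious obstacle here, since the argument is a standard discrete analogue of the fundamental theorem of calculus combined with Cauchy--Schwarz; the only points requiring a little care are keeping the weights $\Delta t$ correctly attached so that the Cauchy--Schwarz split produces exactly the factor $(j-1)\Delta t \le T$, and verifying that the same constant $C_1$ covers both the $\|z^1\|_Z^2$ term (which needs $2$) and the sum term (which needs $2T$), which is precisely why the $\max\{T,1\}$ appears.
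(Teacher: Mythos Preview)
Your proof is correct. The paper does not actually prove this lemma---it cites it from \cite{Sarahs}---but your argument is exactly the approach the paper uses to prove the analogous DDQ result (Lemma~\ref{lem:3_zDDQErrorBounds}, specifically the bound \eqref{eq:3_pzDDQpwBound}): write the telescoping representation, take norms, apply Cauchy--Schwarz to the sum, use $(a+b)^2\le 2(a^2+b^2)$, bound $(j-1)\Delta t\le T$, and absorb constants into $2\max\{T,1\}$.
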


The difference quotient approach to POD then has the following pointwise error bounds. 

\begin{theorem}[Pointwise Data Error Bounds for \(K_1\), {\cite[Theorem 7]{Sarahs}}] \label{thm:2_dataDQpwErrorBounds}
    Let \(U = \{u^j\}_{j=1}^N\) be the snapshots, \(X_r = \Span\{\varphi_k\}_{k=1}^r\) and \(\Pi^X_r : X\to X\) be the orthogonal projection onto \(X_r\). Let \(s\) be the number of positive POD eigenvalues for \(K_1\). Then 
    \begin{equation} \label{eq:2_dataDQpwError}
        \max_{1\leq j \leq N}\|u^j-\Pi^X_r u^j\|^2_X \leq C\left(\sum_{k=r+1}^s \lambda^{\mathrm{DQ1}}_k\right).
    \end{equation}
    If \(Y\) is a Hilbert space with \(U \subset Y\) then 
    \begin{equation} \label{eq:2_dataDQpwErrorY}
        \max_{1\leq j \leq N}\|u^j-\Pi^X_r u^j\|^2_Y \leq C\left(\sum_{k=r+1}^s \lambda^{\mathrm{DQ1}}_k\|\varphi_k\|^2_Y\right),
    \end{equation}
    and in addition if \(\pi_r:Y\to Y\) is a bounded linear projection onto \(X_r\) then 
    \begin{equation} \label{eq:2_dataDQpwErrorYpi_r}
        \max_{1\leq j \leq N}\|u^j-\pi_r u^j\|^2_Y \leq C\left(\sum_{k=r+1}^s \lambda^{\mathrm{DQ1}}_k\|\varphi_k-\pi_r \varphi_k\|^2_Y\right),
    \end{equation}
    where \(C = 2\max\{T, 1\}\).
\end{theorem}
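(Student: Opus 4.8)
The plan is to obtain each pointwise bound by applying the general pointwise norm estimate of \Cref{lem:2_DQPointwiseErrorBounds} to the relevant projection-error sequence, and then to recognize the resulting right-hand side as exactly the left-hand side of one of the DQ1 extended data error formulas in \Cref{lem:2_DQPODErrorFormulas}. The entire argument rests on one elementary observation: since $\Pi_r^X$ (and, more generally, any bounded linear projection $\pi_r$) is linear, it commutes with the difference quotient operator. Setting $z^j = u^j - \Pi_r^X u^j$, linearity gives
\begin{equation*}
    \p z^j = \frac{(u^{j+1} - \Pi_r^X u^{j+1}) - (u^j - \Pi_r^X u^j)}{\Delta t} = \p u^j - \Pi_r^X \p u^j,
\end{equation*}
so the difference quotient of the projection error equals the projection error of the difference quotient. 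This identity is what lets the DQ1 POD data, which controls $\|\p u^\ell - \Pi_r^X \p u^\ell\|$, feed directly into a pointwise-in-time bound.

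For the first bound \eqref{eq:2_dataDQpwError}, I would apply \Cref{lem:2_DQPointwiseErrorBounds} with $Z = X$ and $z^j = u^j - \Pi_r^X u^j$, using the commutation identity to rewrite $\|\p z^\ell\|_X^2 = \|\p u^\ell - \Pi_r^X \p u^\ell\|_X^2$. This yields
\begin{equation*}
    \max_{1\leq j \leq N}\|u^j - \Pi_r^X u^j\|^2_X \leq C_1\left(\|u^1 - \Pi_r^X u^1\|^2_X + \sum_{\ell=1}^{N-1} \Delta t\,\|\p u^\ell - \Pi_r^X \p u^\ell\|^2_X\right).
\end{equation*}
The parenthesized quantity is precisely the left-hand side of the DQ1 error formula \eqref{eq:2_DQPODError}, hence equals $\sum_{k=r+1}^s \lambda_k^{\mathrm{DQ1}}$. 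Since $C_1 = 2\max\{T,1\} = C$, this is exactly \eqref{eq:2_dataDQpwError}.

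The remaining two estimates follow the identical template, changing only the ambient norm and the error formula invoked. For \eqref{eq:2_dataDQpwErrorY} I apply the lemma with $Z = Y$ and the same sequence $z^j = u^j - \Pi_r^X u^j$; the right-hand side is now the left side of \eqref{eq:2_DQPODErrorY}, equal to $\sum_{k=r+1}^s \lambda_k^{\mathrm{DQ1}}\|\varphi_k\|_Y^2$. For \eqref{eq:2_dataDQpwErrorYpi_r} I take $Z = Y$ and $z^j = u^j - \pi_r u^j$; the same commutation argument, valid because $\pi_r$ is linear, gives $\p z^j = \p u^j - \pi_r \p u^j$, and the right-hand side matches the left side of \eqref{eq:2_DQPODErrorYpi_r}, equal to $\sum_{k=r+1}^s \lambda_k^{\mathrm{DQ1}}\|\varphi_k - \pi_r \varphi_k\|_Y^2$.

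There is no genuinely difficult step here; the theorem is essentially a clean composition of two prior lemmas. The one point that requires care is the commutation identity: it relies on nothing beyond linearity of the projection, so it applies equally to the orthogonal projection $\Pi_r^X$ and to a general bounded linear $\pi_r$, and it is independent of whether the error is subsequently measured in the $X$- or $Y$-norm. I would also confirm explicitly that the constant $C_1 = 2\max\{T,1\}$ produced by \Cref{lem:2_DQPointwiseErrorBounds} coincides with the claimed constant $C$, which it does.
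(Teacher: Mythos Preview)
Your proposal is correct and follows exactly the approach the paper uses. The paper itself does not give a proof of \Cref{thm:2_dataDQpwErrorBounds} (it is cited from \cite{Sarahs}), but the analogous DDQ result, \Cref{thm:3_dataDDQErrorBounds}, is proved in precisely the way you describe: set $z^j = u^j - \Pi_r^X u^j$ (or $u^j - \pi_r u^j$) in the pointwise norm lemma, then invoke the extended data error formulas to collapse the right-hand side into the eigenvalue sum.
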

The following corollary from \cite{Sarahs} states results for weighted sums of the snapshot data errors. 
\begin{corollary}[Weighted Sum Data Error Bounds, {\cite[Corollary 8]{Sarahs}}]\label{cor:2_dataSumDQpwErrorBounds}
    Let \(U = \{u^j\}_{j=1}^N\) be the snapshots, \(X_r = \Span\{\varphi_k\}_{k=1}^r\), and \(\Pi^X_r : X\to X\) be the orthogonal projection onto \(X_r\). Let \(s\) be the number of positive POD eigenvalues for \(K_1\). Then 
    \begin{equation} \label{eq:2_dataSumDQpwError}
        \sum_{j=1}^N \Delta t\|u^j-\Pi^X_r u^j\|^2_X \leq C\left(\sum_{k=r+1}^s \lambda^{\mathrm{DQ1}}_k\right).
    \end{equation}
    If \(Y\) is a Hilbert space with \(U \subset Y\) then 
    \begin{equation} \label{eq:2_dataSumDQpwErrorY}
        \sum_{j=1}^N \Delta t\|u^j-\Pi^X_r u^j\|^2_Y \leq C\left(\sum_{k=r+1}^s \lambda^{\mathrm{DQ1}}_k\|\varphi_k\|^2_Y\right).
    \end{equation}
    If in addition if \(\pi_r:Y\to Y\) is a bounded linear projection onto \(X_r\) then 
    \begin{equation} \label{eq:2_dataSumDQpwErrorYpi_r}
        \sum_{j=1}^N \Delta t\|u^j-\pi_r u^j\|^2_Y \leq C\left(\sum_{k=r+1}^s \lambda^{\mathrm{DQ1}}_k\|\varphi_k-\pi_r \varphi_k\|^2_Y\right)
    \end{equation}
    where \(C = 4\max\{T^2, T\}\).
\end{corollary}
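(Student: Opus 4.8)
The plan is to derive all three weighted-sum bounds directly from the pointwise estimates already established in Theorem~\ref{thm:2_dataDQpwErrorBounds}. The elementary mechanism is that any weighted sum with uniform weight is controlled by the total weight multiplied by the largest term: for a fixed norm and projection,
\[
\sum_{j=1}^N \Delta t\,\|u^j-\Pi^X_r u^j\|^2_X \leq \left(\sum_{j=1}^N \Delta t\right)\max_{1\leq j\leq N}\|u^j-\Pi^X_r u^j\|^2_X.
\]
Thus the task reduces to two steps: (i) bounding the total weight $\sum_{j=1}^N \Delta t$, and (ii) inserting the pointwise bound from Theorem~\ref{thm:2_dataDQpwErrorBounds}.

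First I would bound the total weight. Since there are $N$ snapshots, each carrying weight $\Delta t = T/(N-1)$, the total weight is $\sum_{j=1}^N \Delta t = N\Delta t = TN/(N-1)$. The factor $N/(N-1)$ is decreasing in $N$ and attains its maximum value $2$ at $N=2$, so $\sum_{j=1}^N \Delta t \leq 2T$.

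Next I would apply Theorem~\ref{thm:2_dataDQpwErrorBounds}. In the $X$-norm case that theorem gives $\max_{1\leq j\leq N}\|u^j-\Pi^X_r u^j\|^2_X \leq 2\max\{T,1\}\sum_{k=r+1}^s \lambda^{\mathrm{DQ1}}_k$, so combining with the weight bound produces the constant $2T\cdot 2\max\{T,1\} = 4T\max\{T,1\} = 4\max\{T^2,T\}$, which is exactly the claimed $C$. The $Y$-norm inequality and the general-projection inequality follow in the identical manner, replacing the right-hand side of the pointwise estimate by its $Y$-weighted form — with weights $\|\varphi_k\|^2_Y$ and $\|\varphi_k-\pi_r\varphi_k\|^2_Y$, respectively — taken from the corresponding lines of Theorem~\ref{thm:2_dataDQpwErrorBounds}.

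Since each step is a one-line estimate, there is no genuine obstacle; the only point requiring a little care is the weight accounting, namely confirming that the correct total is $N\Delta t$ (rather than $(N-1)\Delta t = T$) and that the resulting factor of $2$ combines with the $2\max\{T,1\}$ from the pointwise theorem to reproduce $C=4\max\{T^2,T\}$ exactly.
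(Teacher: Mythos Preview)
Your proposal is correct and matches the paper's approach. The paper does not spell out the proof of this corollary (it is quoted from \cite{Sarahs}), but the analogous DDQ result, Corollary~\ref{cor:3_dataSumDDQErrorBounds}, is stated with constant $C=6\max\{T^4,T\}$ and the paper notes its proof is identical to that of \cite[Corollary~8]{Sarahs}; your mechanism of bounding $\sum_{j=1}^N \Delta t\le 2T$ and multiplying by the pointwise constant reproduces both $4\max\{T^2,T\}$ here and $6\max\{T^4,T\}$ in the DDQ case, confirming this is precisely the intended argument.
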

These results were applied to the heat equation and specifically Lemma \ref{lem:2_DQPODErrorFormulas}, Theorem \ref{thm:2_dataDQpwErrorBounds}, and Corollary \ref{cor:2_dataSumDQpwErrorBounds} were used to prove pointwise error bounds for the POD reduced order model of the heat equation in \cite{Sarahs}.

\section{A New Method for POD with Second Difference Quotients}
\label{sec:3_DDQ}

In this section, we propose a new POD method using 2nd difference quotients and prove corresponding results on the pointwise data errors. This method extends the approach with 1st difference quotients in Section \ref{sec:2_DQPOD} and on work done in \cite{Amsallem2, Herkt} using all of the snapshots, 1st difference quotients, and 2nd difference quotients. POD with 1st difference quotients allows for ROM error bounds to be proven for heat equation and other 1st order in time PDE problems; see, e.g., \cite{garcia1, garcia2, Koc2}. Here, we utilize the 2nd difference quotients to analyze a 2nd order in time PDE problem.

\subsection{DDQ POD Approach} \label{sec:3_DDQPODmethod}

For the DDQ POD method, we include one snapshot, one 1st difference quotient, and all of the 2nd difference quotients. We use the second difference quotient \begin{equation} \label{eq:3_DDQDef}
    \p\p u^j = \frac{u^{j+1}-2u^j + u^{j-1}}{\Delta t^2}.
\end{equation} 
This means that for the data \(U = \{u^j\}_{j=1}^N\),  we aim to minimize the following error 
\begin{equation} \label{eq:3_DDQErrorEq}
    E_r^{\mathrm{DDQ}} = \|u^1 - \Pi_r^X u^1\|^2_X + \|\p u^1 - \Pi_r^X \p u^1\|^2_X + \sum_{j=2}^{N-1} \Delta t\|\p\p u^j - \Pi_r^X \p\p u^j\|^2_X.
\end{equation}
This error function has a similar structure to the method in Section \ref{sec:2_DQPOD} where now we select all the 2nd difference quotients to have a weight of \( \Delta t\) and the first snapshot and first difference quotient are weighted by 1.

The POD operator corresponding to this error is 
\begin{equation} \label{eq:3_DDQPODop}
    K_2f = f^1u^1 + f^2\p u^1 + \sum_{j=2}^{N-1} \Delta t f^{j+1}\p\p u^j.
\end{equation}
Here \(K_2f = Kf\) where \(w^1 = u^1\), \(w^2 = \p u^1\), and \(w^{j+1} = \p\p u^j\) for \(j = 2, \ldots, N-1\) with \(\gamma_1 = \gamma_2 = 1\) and \(\gamma_j = \Delta t\) for \(j = 3,\ldots, N\). 
\begin{lem}[Linear Independence of 2nd Difference Quotient Data Set]\label{lem:3_LinIndDDQdata}

If \(\{u^i\}_{i=1}^N\) is linearly independent, then \(\{w^i\}_{i=1}^N\) given by \(w^1 = u^1\), \(w^2 = \p u^1\) and \(w^{i+1} = \p\p u^i\) for \(i = 2,\ldots,N-1\) is linearly independent.
\end{lem}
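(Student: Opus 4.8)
The plan is to recognize that the map sending $\{u^i\}_{i=1}^N$ to $\{w^i\}_{i=1}^N$ is linear, and to encode it by a coefficient matrix $A \in \real^{N\times N}$ so that $w^k = \sum_{j=1}^N A_{kj}\, u^j$ for each $k$. With this in hand, linear independence of $\{w^i\}$ follows from that of $\{u^i\}$ as soon as $A$ is invertible: if $\sum_{k} c_k w^k = 0$, then collecting the coefficient of each $u^j$ gives $\sum_{k} c_k A_{kj} = 0$ for every $j$, i.e. $A^\top c = 0$, and invertibility of $A$ forces $c = 0$. Thus the entire statement reduces to showing $\det A \neq 0$.

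First I would read off the rows of $A$ directly from the definitions $w^1 = u^1$, $w^2 = \p u^1 = (u^2 - u^1)/\Delta t$, and $w^{k} = \p\p u^{k-1} = (u^k - 2u^{k-1} + u^{k-2})/\Delta t^2$ for $k = 3, \ldots, N$. The key observation is that row $k$ involves only $u^1$ (when $k=1$), $u^1,u^2$ (when $k=2$), and $u^{k-2},u^{k-1},u^k$ (when $k \geq 3$); in every case the largest column index that appears is exactly $k$, so $A_{kj} = 0$ whenever $j > k$ and $A$ is lower triangular. Its diagonal entries are $A_{1,1} = 1$, $A_{2,2} = 1/\Delta t$, and $A_{k,k} = 1/\Delta t^2$ for $k \geq 3$, all nonzero, so $\det A = (\Delta t)^{-(2N-3)} \neq 0$.

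The only point requiring genuine care is the index bookkeeping caused by the shift $w^{k} = \p\p u^{k-1}$: one must check that the second difference quotient occupying row $k$ reaches no further than column $k$, which is precisely what makes $A$ triangular rather than merely banded and nonzero on the diagonal. Once the triangular structure with nonvanishing diagonal is established, invertibility is immediate and the reduction above completes the argument. Equivalently, one could prove by induction that $\Span\{w^1,\ldots,w^k\} = \Span\{u^1,\ldots,u^k\}$ for each $k$, using that $u^k$ enters $w^k$ with a nonzero coefficient, but the triangular-determinant route is the most direct.
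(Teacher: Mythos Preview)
Your argument is correct. The paper itself omits the proof, noting only that it is similar to \cite[Lemma~4]{Sarahs} (the first-difference-quotient analogue), so there is no explicit argument to compare against; your lower-triangular change-of-basis matrix with nonzero diagonal is exactly the natural extension of that approach to second difference quotients, and the determinant computation $\det A = (\Delta t)^{-(2N-3)}$ is accurate.
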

The proof is similar to the proof of \cite[Lemma 4]{Sarahs} and is omitted.

With \(\{\lambda_j^{\mathrm{DDQ}}\}_{j=1}^N\) as the POD eigenvalues and \(\{\varphi_k\}_{k=1}^r\) as the POD modes for the data,  Lemma \ref{lem:3_DDQPODErrorFormulas} provides error formulas for the data approximation.

\begin{lem}[DDQ POD Extended Data Error Formulas]\label{lem:3_DDQPODErrorFormulas}

Let \(U = \{u^j\}_{j=1}^N\) be the snapshots, \(X_r = \Span\{\varphi_k\}_{k=1}^r\), and \(\Pi_r^X:X\to X\) be the orthonormal projection onto \(X_r\). Let \(s\) be the number of positive POD singular values for \(K_2\) defined in Equation \eqref{eq:2_DQPODop}. Then
\begin{equation} \label{eq:3_DQPODError}
    \|u^1 - \Pi_r^X u^1\|^2_X + \|\p u^1 - \Pi_r^X \p u^1\|^2_X + \sum_{j=2}^{N-1} \Delta t\|\p\p u^j - \Pi_r^X \p\p u^j\|^2_X = \sum_{k=r+1}^s \lambda_k^{\mathrm{DDQ}}.
\end{equation}
If \(Y\) is a Hilbert space with \(U \subset Y\), then 
\begin{equation} \label{eq:3_DQPODErrorY}
    \|u^1 - \Pi_r^X u^1\|^2_Y + \|\p u^1 - \Pi_r^X \p u^1\|^2_Y + \sum_{j=2}^{N-1} \Delta t\|\p\p u^j - \Pi_r^X \p\p u^j\|^2_Y = \sum_{k=r+1}^s \lambda_k^{\mathrm{DDQ}}\|\varphi_k\|^2_Y.
\end{equation}
In addition if \(\pi_r:Y\to Y\) is a bounded linear projection onto \(X_r\) then
\begin{equation} \label{eq:3_DQPODErrorYpi_r}
    \|u^1 - \pi_r u^1\|^2_Y + \|\p u^1 - \pi_r \p u^1\|^2_Y + \sum_{j=2}^{N-1} \Delta t\|\p\p u^j - \pi_r \p\p u^j\|^2_Y = \sum_{k=r+1}^s \lambda_k^{\mathrm{DDQ}}\|\varphi_k - \pi_r \varphi_k\|^2_Y. 
\end{equation}
\end{lem}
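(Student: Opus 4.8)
The plan is to observe that the DDQ POD operator $K_2$ is simply the standard POD operator $K$ applied to a relabeled snapshot set with a particular choice of weights, so that the three claimed identities follow at once from the standard data error formula \eqref{eq:2_StPODErrorFormula} together with the extended formulas in \Cref{lem:2_StPODExtendedErrors}, after expanding the weighted sum. In effect, all of the analytic content has already been packaged into the standard results, and the present lemma is their specialization.

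First I would recall the identification stated just before the lemma: $K_2 f = K f$ where $w^1 = u^1$, $w^2 = \p u^1$, and $w^{j+1} = \p\p u^j$ for $j = 2, \ldots, N-1$, with weights $\gamma_1 = \gamma_2 = 1$ and $\gamma_j = \Delta t$ for $j = 3, \ldots, N$. Since each $w^j$ is a finite linear combination of elements of $U$, the hypothesis $U \subset Y$ guarantees $W = \{w^j\}_{j=1}^N \subset Y$, so \Cref{lem:2_StPODExtendedErrors} applies verbatim with $\lambda_k = \lambda_k^{\mathrm{DDQ}}$ and with the same POD modes $\{\varphi_k\}$ and projections $\Pi_r^X$ and $\pi_r$. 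I would also note that \Cref{lem:3_LinIndDDQdata} pins down the number $s$ of positive singular values, so that the right-hand sums range over the correct index set.

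Next I would substitute the specific $w^j$ and $\gamma_j$ into each target formula. For the $X$-norm identity, the weighted sum $\sum_{j=1}^N \gamma_j \|w^j - \Pi_r^X w^j\|_X^2$ of \eqref{eq:2_StPODErrorFormula} splits into the $j = 1$ term $\|u^1 - \Pi_r^X u^1\|_X^2$ (weight $1$), the $j = 2$ term $\|\p u^1 - \Pi_r^X \p u^1\|_X^2$ (weight $1$), and the remaining terms $j = 3, \ldots, N$ (weight $\Delta t$); re-indexing the latter via $j \mapsto j+1$ converts them into $\sum_{j=2}^{N-1} \Delta t \|\p\p u^j - \Pi_r^X \p\p u^j\|_X^2$. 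This reproduces \eqref{eq:3_DQPODError} exactly. Applying the identical splitting and re-indexing to the $Y$-norm formula \eqref{eq:2_StPODErrorY} and to the $\pi_r$ formula \eqref{eq:2_StPODErrorYpi_r} then yields \eqref{eq:3_DQPODErrorY} and \eqref{eq:3_DQPODErrorYpi_r}, respectively.

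There is no substantive analytic obstacle here, since the work is entirely inherited from the standard singular value decomposition of $K$. The only steps requiring care are the index bookkeeping in the re-indexing and the verification that the weights $\gamma_1 = \gamma_2 = 1$, $\gamma_j = \Delta t$ used to build $K_2$ are exactly the weights implicit in the three target identities. For this reason I expect the proof to be short, mirroring the proof of the analogous first-difference-quotient result \Cref{lem:2_DQPODErrorFormulas} in \cite{Sarahs}.
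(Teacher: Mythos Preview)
Your proposal is correct and is essentially identical to the paper's own proof, which simply invokes Equation~\eqref{eq:2_StPODErrorFormula} and Lemma~\ref{lem:2_StPODExtendedErrors} with the POD eigenvalues for the operator $K_2$ in~\eqref{eq:3_DDQPODop}. Your added remarks on the index bookkeeping and on $U\subset Y$ implying $W\subset Y$ are fine elaborations, though the appeal to Lemma~\ref{lem:3_LinIndDDQdata} is not actually needed here, since $s$ is by definition the number of positive singular values of $K_2$.
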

\begin{proof}
    This follows from Equation \eqref{eq:2_StPODErrorFormula} and Lemma \ref{lem:2_StPODExtendedErrors} where \(\{\lambda_j^{\mathrm{DDQ}}\}_{j=1}^N\) are taken as the POD eigenvalues for the POD operator in Equation \eqref{eq:3_DDQPODop}.
\end{proof}
Lemma \ref{lem:3_DDQPODErrorFormulas} will be verified using data from a damped wave equation in Section \ref{sec:5_PODdataComps}.

\subsection{Pointwise Error Bounds}\label{sec:3_PWErrorBounds}

In this section, we prove pointwise error bounds for the data when using DDQ POD. We extend the proof ideas used in \cite{Sarahs} for DQ POD and develop general error formulas which will be used again in Section \ref{sec:4_All} to prove ROM error bounds. 

%\textcolor{blue}{Maybe? The next results is the discrete time analogue of the following continuous time identity: (Maybe this actually does not help anything!)}

Lemma \ref{lem:3_zByDDQs} is important for proving Lemma \ref{lem:3_zDDQErrorBounds} which will be the main result for proving the pointwise error bounds for the data and for the ROM.

\begin{lem}[Representing \(z^n\) with 2nd Difference Quotients] \label{lem:3_zByDDQs} 
Let \(\Delta t > 0\) and \(\{z^n\}_{n=1}^N \subset Z\) where \(Z\) is a vector space.
Then
\begin{equation} \label{eq:3_pzDDQEq}
    \p z^n = \p z^1 + \Delta t\sum_{i=2}^{n} \p\p z^i,\quad \text{ for }  n = 2, \ldots, N,
\end{equation}
\begin{equation} \label{eq:3_zDDQEq}
    z^n = z^1 + (n-1)\Delta t\p z^1 + \Delta t^2\sum_{i=2}^{n-1} (n-i)\p\p z^i,  \quad \text{ for }  n = 3, \ldots, N.
\end{equation}
\end{lem}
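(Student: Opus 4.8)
The plan is to prove both identities by telescoping, using the single structural observation that a second difference quotient is the backward difference of forward differences. First I would record the identity
\begin{equation*}
    \Delta t \, \p\p z^i = \p z^i - \p z^{i-1},
\end{equation*}
which is immediate from the notational fact $\p\p z^i = \pp\pn z^i$ recorded above, together with the elementary telescoping identity
\begin{equation*}
    \Delta t \, \p z^j = z^{j+1} - z^j
\end{equation*}
coming straight from the definition of $\p$. These two are the only facts I expect to need.

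For \eqref{eq:3_pzDDQEq}, I would sum the first identity over $i = 2, \ldots, n$. The right-hand side telescopes,
\begin{equation*}
    \Delta t \sum_{i=2}^{n} \p\p z^i = \sum_{i=2}^{n} \left( \p z^i - \p z^{i-1} \right) = \p z^n - \p z^1,
\end{equation*}
and rearranging gives \eqref{eq:3_pzDDQEq} at once. (Equivalently, this is a one-line induction on $n$ with base case $n=2$.)

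For \eqref{eq:3_zDDQEq}, I would first telescope the definition of $\p$ to write
\begin{equation*}
    z^n = z^1 + \Delta t \sum_{j=1}^{n-1} \p z^j,
\end{equation*}
and then substitute \eqref{eq:3_pzDDQEq} for each $\p z^j$ with $j \geq 2$, leaving the $j=1$ term as $\p z^1$. Collecting the $\p z^1$ contributions produces the coefficient $(n-1)$, and the remainder is the double sum $\Delta t^2 \sum_{j=2}^{n-1} \sum_{i=2}^{j} \p\p z^i$. The one step that needs care is the interchange of summation order: for a fixed index $i$ with $2 \leq i \leq n-1$, the inner index $j$ ranges over $i \leq j \leq n-1$, which is exactly $n-i$ values, so
\begin{equation*}
    \sum_{j=2}^{n-1} \sum_{i=2}^{j} \p\p z^i = \sum_{i=2}^{n-1} (n-i) \, \p\p z^i.
\end{equation*}
Substituting this back yields \eqref{eq:3_zDDQEq}.

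I expect this final index swap (equivalently, the off-by-one bookkeeping if one instead argues by induction, using \eqref{eq:3_pzDDQEq} in the inductive step) to be the only place where an error could realistically creep in; everything else is forced by the two telescoping identities. A sanity check at $n=3$, where \eqref{eq:3_zDDQEq} should read $z^3 = z^1 + 2\Delta t\, \p z^1 + \Delta t^2 \p\p z^2$, confirms that the weights $(n-i)$ are correct.
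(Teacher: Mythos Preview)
Your proposal is correct and follows essentially the same approach as the paper: both arguments telescope $\Delta t\,\p\p z^i = \p z^i - \p z^{i-1}$ to get \eqref{eq:3_pzDDQEq}, then sum \eqref{eq:3_pzDDQEq} over $j$ and interchange the order of the resulting double sum to obtain \eqref{eq:3_zDDQEq}. Your presentation is marginally cleaner in that you start directly from $z^n = z^1 + \Delta t\sum_{j=1}^{n-1}\p z^j$ rather than the paper's add-and-subtract maneuver, but the substance is identical.
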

\begin{proof}
    First, notice that \begin{equation*}
        \p\p z^i = \frac{z^{i+1} - 2z^i + z^{i-1}}{\Delta t^2} = \frac{\p z^{i} - \p z^{i-1}}{\Delta t}.
    \end{equation*}
    Then, \begin{equation*}
        \p z^1 + \Delta t \sum_{i=2}^{n} \p\p z^i = \p z^1 + \Delta t \sum_{i=2}^{n} \frac{\p z^{i} - \p z^{i-1}}{\Delta t} = \p z^1 + \sum_{i=2}^{n} \p z^{i} - \p z^{i-1} = \p z^1 + \p z^n - \p z^1 = \p z^n.
    \end{equation*}
    This sum clearly telescopes and yields 
    %\begin{equation*}
        %\p z^1 + \Delta t \sum_{i=2}^{n} \p\p z^i = \p z^1 + \p z^n - \p z^1 = \p z^n,
    %\end{equation*}
    so \eqref{eq:3_pzDDQEq} is proven. 

    To prove \eqref{eq:3_zDDQEq}, first consider \eqref{eq:3_pzDDQEq} with \(n = j\), sum over \(j = 2, \ldots, n-1\), and multiply by \(\Delta t\) to get
     \begin{equation*}
       \Delta t\sum_{j=2}^{n-1} \left(\p z^j - \p z^1\right) = \Delta t^2 \sum_{j=2}^{n-1}\sum_{i=2}^{j} \p\p z^i.
    \end{equation*}
    Add and subtract \(\Delta t \p z^1\) on the left hand side so that the sum goes from \(j = 1, \ldots, n-1\): 
    \begin{equation*}
       \Delta t\sum_{j=2}^{n-1} \left(\p z^j - \p z^1\right) + \Delta t\p z^1 - \Delta t\p z^1 = \Delta t^2 \sum_{j=2}^{n-1}\sum_{i=2}^{j} \p\p z^i
    \end{equation*}
    \begin{equation*}
        \Longrightarrow \qquad \Delta t\sum_{j=1}^{n-1} \left(\p z^j - \p z^1\right)= \Delta t^2 \sum_{j=2}^{n-1}\sum_{i=2}^{j} \p\p z^i.
    \end{equation*}
    Evaluating the left hand side and rearranging, we have
    \begin{equation*}
       z^{n} = z^1 + (n-1)\Delta t\p z^1 + \Delta t^2 \sum_{j=2}^{n-1}\sum_{i=2}^{j} \p\p z^i.
    \end{equation*}
    Next, we swap the order of the summations with \( 2 \leq i \leq n-1\) and \( i \leq j \leq n - 1\), yielding
    \begin{align*}
       z^{n} = z^1 + (n-1)\Delta t\p z^1 + \Delta t^2 \sum_{i=2}^{n-1}\sum_{j=i}^{n-1} \p\p z^i = z^1 + (n-1)\Delta t\p z^1 + \Delta t^2 \sum_{i=2}^{n-1} (n - i) \p\p z^i.
    \end{align*}
\end{proof}

\begin{lem}[Pointwise Error Bounds for a General Function] \label{lem:3_zDDQErrorBounds} 

Let \(T > 0\), \(N > 0\), \(Z\) be a normed space, \(\{z^j\}_{j=1}^N \subset Z\), \(\Delta t = T/(N-1)\), and define the backwards average by \begin{equation*}
    \overline{z}^j = \frac{z^j + z^{j-1}}{2}.
\end{equation*} Then 

\begin{equation} \label{eq:3_zDDQpwBound}
   \max_{1\leq j \leq N}\|z^j\|_Z^2 \leq C_2\left(\|z^1\|_Z^2 + \|\p z^1\|_Z^2 + \sum_{i=2}^{N-1} \Delta t\|\p\p z^i\|_Z^2\right),
\end{equation}
\begin{equation} \label{eq:3_zavgDDQpwBound}
   \max_{2 \leq j \leq N}\|\overline{z}^j\|_Z^2 \leq C_2\left(\|z^1\|_Z^2 + \|\p z^1\|_Z^2 + \sum_{i=2}^{N-1} \Delta t\|\p\p z^i\|_Z^2\right),
\end{equation}
\begin{equation} \label{eq:3_pzDDQpwBound}
    \max_{1 \leq j \leq N-1} \|\p z^j\|_Z^2 \leq C_3\left(\|\p z^1\|_Z^2 + \sum_{i=2}^{N-1} \Delta t\|\p\p z^i\|_Z^2\right),
\end{equation}
\begin{equation} \label{eq:3_pnzDDQpwBound}
    \max_{2 \leq j \leq N} \|\pn z^j\|_Z^2 \leq C_3\left(\|\p z^1\|_Z^2 + \sum_{i=2}^{N-1} \Delta t\|\p\p z^i\|_Z^2\right),
\end{equation}
\begin{equation} \label{eq:3_pavgzDDQpwBound}
    \max_{2 \leq j \leq N-1} \|\p \overline{z}^j\|_Z^2 \leq C_3\left(\|\p z^1\|_Z^2 + \sum_{i=2}^{N-1} \Delta t\|\p\p z^i\|_Z^2\right)
\end{equation}
where \(C_2 = 3\max\{T^3, 1\}\) and \(C_3 = 2\max\{T, 1\}\).
\end{lem}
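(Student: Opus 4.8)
The five estimates split naturally into two groups according to whether they bound a value $z^j$ of the sequence or a first difference of it, and within each group all but one estimate will follow from a single ``master'' bound by an elementary averaging argument. The plan is therefore to prove \eqref{eq:3_zDDQpwBound} and \eqref{eq:3_pzDDQpwBound} directly, and to deduce the other three from them. The two direct estimates rely respectively on the representations \eqref{eq:3_zDDQEq} and \eqref{eq:3_pzDDQEq} from Lemma~\ref{lem:3_zByDDQs}.

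For \eqref{eq:3_zDDQpwBound} I would start from \eqref{eq:3_zDDQEq}, which is valid for $n = 2, \ldots, N$ with the empty-sum convention (the case $n=2$ being $z^2 = z^1 + \Delta t\,\p z^1$) and trivially for $n=1$. Applying the triangle inequality and then $(a+b+c)^2 \leq 3(a^2+b^2+c^2)$ gives
\[
\|z^n\|_Z^2 \leq 3\|z^1\|_Z^2 + 3(n-1)^2\Delta t^2\|\p z^1\|_Z^2 + 3\Big(\Delta t^2\sum_{i=2}^{n-1}(n-i)\|\p\p z^i\|_Z\Big)^2 .
\]
The first two terms are controlled using $(n-1)\Delta t \leq (N-1)\Delta t = T$. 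For the third term I would apply Cauchy--Schwarz in the form $\big(\sum a_i b_i\big)^2 \le \big(\sum a_i^2\big)\big(\sum b_i^2\big)$ with $b_i = \Delta t^{1/2}\|\p\p z^i\|_Z$, so that $\sum b_i^2$ is exactly the weighted DDQ sum appearing on the right-hand side and $a_i = \Delta t^{3/2}(n-i)$. Using $\sum_{i=2}^{n-1}(n-i)^2 = \sum_{k=1}^{n-2}k^2 \le (n-1)^3/3$ then yields $\Delta t^3\sum_{i=2}^{n-1}(n-i)^2 \le T^3/3$, so the third term is at most $T^3\sum_{i=2}^{N-1}\Delta t\|\p\p z^i\|_Z^2$. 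Collecting the coefficients $3$, $3T^2$, $T^3$ and checking that each is at most $C_2 = 3\max\{T^3,1\}$ (the only nontrivial check being $T^2 \le \max\{T^3,1\}$, which holds for both $T\ge 1$ and $T<1$) establishes \eqref{eq:3_zDDQpwBound}. Estimate \eqref{eq:3_zavgDDQpwBound} then follows at once, since $\|\overline z^j\|_Z \le \tfrac12(\|z^j\|_Z + \|z^{j-1}\|_Z) \le \max_{1\le k\le N}\|z^k\|_Z$.

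For \eqref{eq:3_pzDDQpwBound} the key observation is that, by the identity $\p\p z^i = (\p z^i - \p z^{i-1})/\Delta t$ noted in the proof of Lemma~\ref{lem:3_zByDDQs}, the forward difference of the sequence $\{\p z^j\}_{j=1}^{N-1}$ is $\p(\p z^j) = \p\p z^{j+1}$. Hence I would apply Lemma~\ref{lem:2_DQPointwiseErrorBounds} to the length-$(N-1)$ sequence $\{\p z^j\}_{j=1}^{N-1}$, whose time horizon $(N-2)\Delta t$ is at most $T$; reindexing $\sum_{\ell=1}^{N-2}\Delta t\|\p\p z^{\ell+1}\|_Z^2 = \sum_{i=2}^{N-1}\Delta t\|\p\p z^i\|_Z^2$ and using monotonicity of $2\max\{\cdot,1\}$ gives \eqref{eq:3_pzDDQpwBound} with $C_3 = 2\max\{T,1\}$ (equivalently, one can telescope \eqref{eq:3_pzDDQEq} directly). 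The last two estimates reduce to this one: \eqref{eq:3_pnzDDQpwBound} holds because $\pn z^j = \p z^{j-1}$, so its maximum over $2\le j\le N$ equals the maximum of $\|\p z^k\|_Z^2$ over $1\le k\le N-1$; and \eqref{eq:3_pavgzDDQpwBound} holds because $\p\overline z^j = \tfrac12(\p z^j + \p z^{j-1})$, whence $\|\p\overline z^j\|_Z \le \max_{1\le k\le N-1}\|\p z^k\|_Z$.

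The routine reductions (averaging and the index shift $\pn z^j = \p z^{j-1}$) and the application of Lemma~\ref{lem:2_DQPointwiseErrorBounds} are straightforward; the one place demanding care is the derivation of \eqref{eq:3_zDDQpwBound}. There the main obstacle is the bookkeeping of constants: one must split the Cauchy--Schwarz inequality so that precisely a $\Delta t$-weighted DDQ sum survives, track the cubic-in-$T$ growth through the bound $\sum_{k=1}^{n-2}k^2 \le (n-1)^3/3$, and verify that the three resulting coefficients all collapse into the single constant $C_2 = 3\max\{T^3,1\}$.
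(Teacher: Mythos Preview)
Your proposal is correct and follows essentially the same approach as the paper: both derive \eqref{eq:3_zDDQpwBound} from the representation \eqref{eq:3_zDDQEq} via the triangle inequality, the bound $(a+b+c)^2\le 3(a^2+b^2+c^2)$, and Cauchy--Schwarz, and both obtain \eqref{eq:3_zavgDDQpwBound}, \eqref{eq:3_pnzDDQpwBound}, \eqref{eq:3_pavgzDDQpwBound} by the same averaging and index-shift reductions. The only differences are cosmetic: for \eqref{eq:3_pzDDQpwBound} you invoke Lemma~\ref{lem:2_DQPointwiseErrorBounds} on the shifted sequence $\{\p z^j\}$ rather than rerunning the telescoping Cauchy--Schwarz argument inline (the paper does the latter), and your Cauchy--Schwarz split for the double-sum term is slightly sharper (yielding coefficient $T^3$ rather than $3T^3$), but both fit under $C_2 = 3\max\{T^3,1\}$.
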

\begin{proof}
To prove \eqref{eq:3_zDDQpwBound}, use \eqref{eq:3_zDDQEq} from Lemma \ref{lem:3_zByDDQs}, take norms and use \((a + b+ c)^2 \leq 3(a^2 + b^2 + c^2)\) to get
\begin{align*}
   \|z^n\|_Z^2 \leq 3\left(\|z^1\|_Z^2 + (n-1)^2\Delta t^2\|\p z^1\|_Z^2 + (n-1)^3\Delta t^3 \sum_{i=2}^{n-1} \Delta t\|\p\p z^i\|_Z^2\right).
\end{align*}
We also have \(T \geq T_n = (n-1)\Delta t\), so
\begin{align*}
   \|z^n\|^2 \leq 3\left(\|z^1\|^2 + T^2\|\p z^1\|^2 + T^3 \sum_{i=2}^{n-1} \Delta t\|\p\p z^i\|^2\right).
\end{align*}
With \(C_2 = 3\max\{T^3, 1\}\),
\begin{align*}
   \|z^n\|_Z^2 \leq C_2\left(\|z^1\|_Z^2 + \|\p z^1\|_Z^2 + \sum_{i=2}^{n-1} \Delta t\|\p\p z^i\|_Z^2\right).
\end{align*} 
Taking the maximum over all \(n\) proves Equation \eqref{eq:3_zDDQpwBound}.
Since \((a+b)^2 \leq 2(a^2+b^2)\), we also have 
\begin{equation*}
    \|\overline{z}^n\|_Z^2 \leq \frac{1}{2}\left(\|z^n\|^2_Z + \|z^{n-1}\|^2_Z\right) \leq \max_{1\leq j \leq N} \|z^j\|^2_Z,
\end{equation*}
which proves \eqref{eq:3_zavgDDQpwBound}.

To prove \eqref{eq:3_pzDDQpwBound}, use \eqref{eq:3_pzDDQEq}, take norms, and use the triangle inequality to yield
\begin{align*}
    \|\p z^n\|_Z \leq \|\p z^1\|_Z + \sum_{i=2}^{n} \Delta t\|\p\p z^i\|_Z.
\end{align*}
Using Cauchy-Schwarz on the sum term, we have
\begin{align*}
    \|\p z^n\|_Z \leq \|\p z^1\|_Z + \left(\sum_{i=2}^n \Delta t\right)^{1/2}\left(\sum_{i=2}^{n} \Delta t\|\p\p z^i\|_Z^2\right)^{1/2}.
\end{align*}
Again using \((a+b)^2 \leq 2(a^2+b^2)\), 
\begin{align*}
    \|\p z^n\|_Z^2 \leq 2\left(\|\p z^1\|_Z^2 + \left(\sum_{i=2}^n \Delta t\right)\left(\sum_{i=2}^{n} \Delta t\|\p\p z^i\|_Z^2\right)\right)
\end{align*}
Once again, since \(T \geq T_n\), we have
\begin{align*}
    \|\p z^n\|^2 \leq 2\left(\|\p z^1\|^2 + T\sum_{i=2}^{n} \Delta t\|\p\p z^i\|^2\right).
\end{align*}
Finally letting \(C_3 = 2\max\{T,1\}\) and taking the maximum over all \(n\) yields Equation \eqref{eq:3_pzDDQpwBound}.
The last two results can be easily shown since \begin{equation*}
    \p z^n = \pn z^{n+1}
\end{equation*}
and 
\begin{equation*}
    \|\p \overline{z}^n\|_Z^2 \leq \frac{1}{2}\left(\|\p z^n\|^2_Z + \|\p z^{n-1}\|^2_Z\right) \leq \max_{1\leq j \leq N-1} \|\p z^j\|^2_Z.
\end{equation*}
\end{proof}
With Lemma \ref{lem:3_zDDQErrorBounds} we can prove pointwise error bounds for the DDQ approach for POD. 
\begin{theorem} \label{thm:3_dataDDQErrorBounds}
    Let \(U = \{u^j\}_{j=1}^N\) be the snapshots, \(X_r = \Span\{\varphi_k\}_{k=1}^r\), and \(\Pi^X_r : X\to X\) be the orthogonal projection onto \(X_r\). Let \(s\) be the number of positive POD eigenvalues for \(K_2\). Then 
    \begin{equation} \label{eq:3_dataDDQErrorBound}
        \max_{1\leq j \leq N}\|u^j-\Pi^X_r u^j\|^2_X \leq C\left(\sum_{k=r+1}^s \lambda^{\mathrm{DDQ}}_k\right).
    \end{equation}
    If \(Y\) is a Hilbert space with \(U \subset Y\) then 
    \begin{equation} \label{eq:3_dataDDQErrorBoundY}
        \max_{1\leq j \leq N}\|u^j-\Pi^X_r u^j\|^2_Y \leq C\left(\sum_{k=r+1}^s \lambda^{\mathrm{DDQ}}_k\|\varphi_k\|^2_Y\right),
    \end{equation}
    and in addition if \(\pi_r:Y\to Y\) is a bounded linear projection onto \(X_r\) then 
    \begin{equation} \label{eq:3_dataDDQErrorBoundYpi_r}
        \max_{1\leq j \leq N}\|u^j-\pi_r u^j\|^2_Y \leq C\left(\sum_{k=r+1}^s \lambda^{\mathrm{DDQ}}_k\|\varphi_k-\pi_r \varphi_k\|^2_Y\right).
    \end{equation}
    where \(C = 3\max\{T^3, 1\}\)
\end{theorem}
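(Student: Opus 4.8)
The plan is to reduce the statement to the two results already in hand: the general pointwise bound of Lemma~\ref{lem:3_zDDQErrorBounds} and the DDQ data error formula of Lemma~\ref{lem:3_DDQPODErrorFormulas}. The key observation is that the error sequence $z^j := u^j - \Pi^X_r u^j$ is a legitimate sequence in the normed space $X$ to which Lemma~\ref{lem:3_zDDQErrorBounds} applies, and, crucially, the projection commutes with the difference quotient operators because $\Pi^X_r$ is linear and $\p$, $\p\p$ are fixed linear combinations of the snapshots. Concretely, $\p z^1 = \p u^1 - \Pi^X_r \p u^1$ and $\p\p z^i = \p\p u^i - \Pi^X_r \p\p u^i$ for $i = 2,\ldots,N-1$.

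With this in place, I would apply \eqref{eq:3_zDDQpwBound} from Lemma~\ref{lem:3_zDDQErrorBounds} to $\{z^j\}$ in the $X$-norm, giving
\[
  \max_{1\le j\le N}\|u^j - \Pi^X_r u^j\|_X^2 \le C_2\Big(\|u^1-\Pi^X_r u^1\|_X^2 + \|\p u^1 - \Pi^X_r\p u^1\|_X^2 + \sum_{i=2}^{N-1}\Delta t\,\|\p\p u^i - \Pi^X_r\p\p u^i\|_X^2\Big),
\]
where $C_2 = 3\max\{T^3,1\}$. The parenthesized quantity is exactly the left-hand side of \eqref{eq:3_DQPODError}, so substituting that identity yields \eqref{eq:3_dataDDQErrorBound} with $C = C_2 = 3\max\{T^3,1\}$, as claimed.

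For the two Hilbert-space statements I would run the identical argument with $Z = Y$ in Lemma~\ref{lem:3_zDDQErrorBounds}: for \eqref{eq:3_dataDDQErrorBoundY} take $z^j = u^j - \Pi^X_r u^j$ and apply the $Y$-norm version of \eqref{eq:3_zDDQpwBound}, then replace the data quantity using \eqref{eq:3_DQPODErrorY} to obtain the weighted tail $\sum_{k=r+1}^s \lambda^{\mathrm{DDQ}}_k\|\varphi_k\|_Y^2$; for \eqref{eq:3_dataDDQErrorBoundYpi_r} take $z^j = u^j - \pi_r u^j$ and use \eqref{eq:3_DQPODErrorYpi_r}. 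The main thing to watch is the commutation step: it relies only on linearity of the projection, and since $\pi_r$ is assumed to be a bounded linear projection the same cancellation $\p(u^1 - \pi_r u^1) = \p u^1 - \pi_r \p u^1$ holds, so no extra hypotheses are needed. Beyond this observation the proof is a direct composition of the two lemmas, and the only bookkeeping is confirming that the constant $C_2$ from Lemma~\ref{lem:3_zDDQErrorBounds} is the one that propagates to $C$ in all three bounds.
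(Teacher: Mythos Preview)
Your proposal is correct and matches the paper's proof essentially step for step: apply Lemma~\ref{lem:3_zDDQErrorBounds} with $z^j = u^j - \Pi^X_r u^j$ (respectively $u^j - \pi_r u^j$) in the appropriate norm, then invoke Lemma~\ref{lem:3_DDQPODErrorFormulas} to identify the right-hand side with the eigenvalue tail, obtaining $C = C_2 = 3\max\{T^3,1\}$. Your explicit remark that linearity of the projection gives $\p z^j = \p u^j - \Pi^X_r \p u^j$ (and similarly for $\p\p$ and $\pi_r$) is a helpful clarification that the paper leaves implicit.
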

\begin{proof}
    Using Lemma \ref{lem:3_zDDQErrorBounds} with \(z^j = u^j-\Pi^X_r u^j\) and \(Z = X\), we have 
    \begin{align*}
        \max_{1\leq j \leq N}\|u^j-\Pi^X_r u^j\|^2_X \leq C_2 & \Biggl( \|u^1 - \Pi_r^X u^1\|^2_X + \|\p u^1 - \Pi_r^X \p u^1\|^2_X \\ 
        & \qquad  +\sum_{j=2}^{N-1} \Delta t\|\p\p u^j - \Pi_r^X \p\p u^j\|^2_X \Biggl).
    \end{align*}
    Applying Lemma \ref{lem:3_DDQPODErrorFormulas}, we have 
    \begin{equation*}
        \max_{1\leq j \leq N}\|u^j-\Pi^X_r u^j\|^2_X \leq C_2\left(\sum_{k=r+1}^s \lambda^{\mathrm{DDQ}}_k\right),
    \end{equation*}
    and renaming \(C = C_2\) proves Equation \eqref{eq:3_dataDDQErrorBound}.
    Following the same process with \(z^j = u^j-\Pi^X_r u^j\) and \(Z = Y\) and with \(z^j = u^j-\pi_r u^j\) and \(Z = Y\) proves Equations \eqref{eq:3_dataDDQErrorBoundY} and \eqref{eq:3_dataDDQErrorBoundYpi_r} respectively. 
\end{proof}
Corollary \ref{cor:3_dataSumDDQErrorBounds} corresponds to bounding a discrete time integral of the data error.
\begin{corollary} \label{cor:3_dataSumDDQErrorBounds}
    Let \(U = \{u^j\}_{j=1}^N\) be the snapshots, \(X_r = \Span\{\varphi_k\}_{k=1}^r\), and \(\Pi^X_r : X\to X\) be the orthogonal projection onto \(X_r\). Let \(s\) be the number of positive POD eigenvalues for \(K_2\). Then 
    \begin{equation} \label{eq:3_dataSumDDQErrorBound}
        \sum_{j=1}^N \Delta t\|u^j-\Pi^X_r u^j\|^2_X \leq C\left(\sum_{k=r+1}^s \lambda^{\mathrm{DDQ}}_k\right).
    \end{equation}
    If \(Y\) is a Hilbert space with \(U \subset Y\) then 
    \begin{equation} \label{eq:33_dataSumDDQErrorBoundY}
        \sum_{j=1}^N \Delta t\|u^j-\Pi^X_r u^j\|^2_Y \leq C\left(\sum_{k=r+1}^s \lambda^{\mathrm{DDQ}}_k\|\varphi_k\|^2_Y\right).
    \end{equation}
    If in addition if \(\pi_r:Y\to Y\) is a bounded linear projection onto \(X_r\) then 
    \begin{equation} \label{eq:3_dataSumDDQErrorBoundYpi_r}
        \sum_{j=1}^N \Delta t\|u^j-\pi_r u^j\|^2_Y \leq C\left(\sum_{k=r+1}^s \lambda^{\mathrm{DDQ}}_k\|\varphi_k-\pi_r \varphi_k\|^2_Y\right).
    \end{equation}
    where \(C = 6\max\{T^4, T\}\).
    
\end{corollary}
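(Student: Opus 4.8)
The plan is to deduce the weighted-sum bound directly from the pointwise data error bound already proved in Theorem~\ref{thm:3_dataDDQErrorBounds}, exactly as Corollary~\ref{cor:2_dataSumDQpwErrorBounds} is obtained from Theorem~\ref{thm:2_dataDQpwErrorBounds} in the DQ case. First I would bound each summand by the maximum over the index, which is a single number independent of the summation variable, to get
\begin{equation*}
    \sum_{j=1}^N \Delta t\,\|u^j-\Pi^X_r u^j\|^2_X \leq \left(\sum_{j=1}^N \Delta t\right)\max_{1\leq n \leq N}\|u^n-\Pi^X_r u^n\|^2_X = N\Delta t \max_{1\leq n \leq N}\|u^n-\Pi^X_r u^n\|^2_X.
\end{equation*}
The only quantitative observation needed is the size of the prefactor $\sum_{j=1}^N \Delta t = N\Delta t$: since $\Delta t = T/(N-1)$ and $N/(N-1)\leq 2$ for $N\geq 2$, this factor satisfies $N\Delta t \leq 2T$.

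Next I would apply the pointwise bound \eqref{eq:3_dataDDQErrorBound} from Theorem~\ref{thm:3_dataDDQErrorBounds}, whose constant is $C_2 = 3\max\{T^3,1\}$, to the maximum. Combining the two estimates yields
\begin{equation*}
    \sum_{j=1}^N \Delta t\,\|u^j-\Pi^X_r u^j\|^2_X \leq 2T\cdot 3\max\{T^3,1\}\left(\sum_{k=r+1}^s \lambda^{\mathrm{DDQ}}_k\right),
\end{equation*}
and then I would absorb the factor $2T$ into the maximum using $6T\max\{T^3,1\} = \max\{6T^4,6T\} = 6\max\{T^4,T\}$, which recovers the stated constant $C = 6\max\{T^4,T\}$. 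This is the same bookkeeping that turns the DQ constant $2\max\{T,1\}$ of Theorem~\ref{thm:2_dataDQpwErrorBounds} into the $4\max\{T^2,T\}$ of Corollary~\ref{cor:2_dataSumDQpwErrorBounds}, so the pattern is already validated in the first-difference-quotient setting.

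The $Y$-norm and general-projection variants \eqref{eq:33_dataSumDDQErrorBoundY} and \eqref{eq:3_dataSumDDQErrorBoundYpi_r} follow by the identical argument, replacing the appeal to \eqref{eq:3_dataDDQErrorBound} by \eqref{eq:3_dataDDQErrorBoundY} and \eqref{eq:3_dataDDQErrorBoundYpi_r}, respectively; the eigenvalue weights $\|\varphi_k\|^2_Y$ and $\|\varphi_k-\pi_r\varphi_k\|^2_Y$ simply carry through unchanged. I do not expect any genuine obstacle here: the entire content is the passage from a maximum to a $\Delta t$-weighted sum, and the only step requiring care is the elementary estimate $N\Delta t\leq 2T$, which is precisely what produces the extra factor of $T$ and hence the jump from the $T^3$ in $C_2$ to the $T^4$ appearing in the final constant. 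A slightly sharper alternative would be to sum the per-index inequality $\|z^n\|_Z^2 \leq 3\big(\|z^1\|_Z^2 + T^2\|\p z^1\|_Z^2 + T^3\sum_{i=2}^{n-1}\Delta t\|\p\p z^i\|_Z^2\big)$ obtained inside the proof of Lemma~\ref{lem:3_zDDQErrorBounds} \emph{before} invoking the error formula, but this produces the same order in $T$ and is not worth the additional effort.
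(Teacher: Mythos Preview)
Your proposal is correct and matches the paper's approach: the paper omits the proof, stating only that it is the same as the proof of Corollary~\ref{cor:2_dataSumDQpwErrorBounds} in the DQ setting, which is precisely the ``bound each term by the maximum, use $N\Delta t\leq 2T$, then apply the pointwise theorem'' argument you give. Your constant bookkeeping $2T\cdot 3\max\{T^3,1\}=6\max\{T^4,T\}$ is exactly the intended computation.
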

\begin{proof}
    The proof of this result is the same as the proof for Corollary 8 in \cite{Sarahs} and is omitted. 
\end{proof}

As with the DQ approach in \cite{Sarahs}, we have pointwise error formulas and no redundancy in the data set.

\section{Reduced Order Model Error Analysis}

\label{sec:4_All}
In this section, we present the chosen PDE problem, the damped wave equation, and the finite elemtn method used for approximating the solution. We also present the POD ROM for the damped wave equation and derive energy and pointwise error bounds for the ROM.  

\subsection{Finite Element Method for the Damped Wave Equation} \label{sec:4_FEWaveEQmethod}
The problem we choose to analyze with the new POD method is the 1-D damped wave equation with zero Dirichlet boundary conditions, 
%\begin{align} \label{eq:4_WaveEq}
%    u_{tt} - c^2 u_{xx} + Du_t - Gu_{txx} &= 0, \text{ in } [0,1]\times [0,T]\nonumber \\
%    u(x, 0) &= u_0,  \\
%    u_t(x,0) &= u_{00}, \nonumber
%\end{align}
\begin{equation}\label{eq:4_WaveEq}
\begin{split}
    u_{tt} - c^2 u_{xx} + Du_t - Gu_{txx} = 0, \quad \text{ in } [0,1]\times [0,T] \\
    u(x, 0) = u_0,  \qquad  u_t(x,0) = u_{00},% \\
\end{split}
\end{equation}
with constants \(c > 0\) and \(D,G \in [0, \infty)\). The constant D is the coefficient of viscous damping and G is the coefficient of Kelvin-Voigt damping. It is important to note that in our analysis and computations \(D\) and \(G\) are never both zero.

The structure of the separation of variables solution demonstrates key differences between both types of damping. Denote \(\lambda_k = \pi k\). The general series solution when \(D > 0\) and \(G = 0\) is
%\begin{equation}
%    u(x,t) = \sum_{k=1}^\infty e^{-\frac{D}{2}t}\left(a_k e^{\sqrt{\frac{D^2}{4} - c^2\lambda_k^2}\, t}+b_ke^{-\sqrt{\frac{D^2}{4} - c^2\lambda_k^2}\, t}\right)\sin(\lambda_k x),
%\end{equation}
\begin{equation} \label{eq:4_WaveEQsolnD}
    u(x,t) = \sum_{k=1}^\infty e^{-\frac{D}{2}t}\left(a_k e^{\xi_k t}+b_ke^{-\xi_k t}\right)\sin(\lambda_k x),  \quad  \xi_k := \sqrt{\frac{D^2}{4} - c^2\lambda_k^2},
\end{equation}
and when \(D = 0\) and \(G > 0\) the solution is 
\begin{equation} \label{eq:4_WaveEQsolnG}
    u(x,t) = \sum_{k=1}^\infty e^{-\frac{G\lambda_k^2}{2}t}\left(c_k e^{\zeta_k t}+d_ke^{-\zeta_k t}\right)\sin(\lambda_k x),  \quad  \zeta_k := \sqrt{\frac{G^2\lambda_k^4}{4} - c^2\lambda_k^2},
\end{equation}
for some constants \(a_k,b_k,c_k,d_k \in \mathbb{C}\) depending on the initial conditions. 

From the first solution with only viscous damping, we can see that as long as \(D < 2c\pi\), all of the terms are oscillatory and each oscillatory mode decays at the same rate of \(D/2\). The second solution shows that the Kelvin-Voigt damping terms are only oscillatory when \(k < \frac{2c}{\pi G}\). When \(k\) is larger, the mode is overdamped and does not oscillate. We also see that the rate of decay for each mode increases with \(k^2\) so only a few of the oscillatory modes contribute meaningfully to the solution in the long term. These differences are explored later in Section \ref{sec:5_all} when comparing POD for the two different types of damping. 

The initial conditions used throughout this work are \begin{equation} \label{eq:4_ICu_0}
    u_0 = \left(e^x + x^2 - \cos(\pi x)\right)\sin(\pi x) + \left(e^{x^2} + x^2 -x\right)\sin(5\pi x),
\end{equation}
\begin{equation}\label{eq:4_ICu_00}
    u_{00} = 0. 
\end{equation}
This initial condition contains complexity that would lead to more high frequency oscillatory modes in the series solution. This means the POD ROM will need more basis functions to be able to represent those oscillations. This leads to more interesting analysis for the errors and in the plots we present in later sections.

\subsubsection{Finite Element Discretization Scheme}
The weak form of this problem is to find \(u \in H^1_0(0,1)\) satisfying\begin{equation}\label{eq:4_WeakFormCont}
    (u_{tt},v)_{L^2} + c^2 (u_{x},v_{x})_{L^2} + D (u_{t},v)_{L^2} + G (u_{tx}, v_{x})_{L^2} = 0
\end{equation}
for all \( v \in H^1_0(0,1).\)
We use finite elements to approximate the solution \(u_h \in V^h\) with the following weak form
\begin{equation}\label{eq:4_WeakFormVh}
    (u_{htt},v_h)_{L^2} + c^2 (u_{hx},v_{hx})_{L^2} + D (u_{ht},v_h)_{L^2} + G (u_{htx}, v_{hx})_{L^2} = 0
\end{equation}
for \(v_h \in V^h \subset H^1_0(0,1)\), where \( V^h \) is the finite element function space. We use linear FE basis functions so \(V^h = \Span\{\phi_i\}_{i=1}^M\) and the time discretization scheme we use is 
\begin{equation}\label{eq:4_WeakFormFEscheme}
    (\p\p u_h^n,v)_{L^2} + c^2 (\widehat{u}^n_{hx},v_{hx})_{L^2} + D (\p \overline{u}^n_h,v_h)_{L^2} + G (\p \overline{u}^n_{hx}, v_{hx})_{L^2} = 0
\end{equation}
for \(n = 2, \ldots, N-1\),
where 
\begin{equation} \label{eq:4_BackwardsAvgandCenteredAvg}
    \overline{u}^n = \frac{u^n + u^{n-1}}{2} \quad \text{and} \quad \widehat{u}^n = \frac{u^{n+1} + 2u^n + u^{n-1}}{4}
\end{equation}
are discrete time averages of the solution. In the undamped case, the centered time average keeps 2nd order accuracy in the iteration seen in \cite{Dupont}. We use the second order centered difference,
\begin{equation} \label{eq:4_1stCenteredDiff}
    \p \overline{u}^n = \frac{u^{n+1} - u^{n-1}}{2\Delta t}, 
\end{equation} for the damping terms. We do not prove that this discrete scheme is second order accurate for the damped case; we leave this to be considered elsewhere. 
\subsubsection{Finite Element Approximations to the Initial Condition} \label{sec:4_FE_ICs}
In this section, we detail our method of obtaining a 2nd order in time accurate set of initial conditions. For our time discretization method, we need \(u^1_h\) and \(u^2_h\) to be given. Obtaining \(u^1_h\) is simple: we use the \(L^2\) projection of \(u_0\). Getting a 2nd order accurate \(u^2_h\) is the difficult part. The method we use is briefly described in \cite{Dupont} and uses the wave equation itself along with a Taylor expansion of \(u\). For completeness, we provide the details of obtaining the two ICs. 

We use the \(L^2\) projection \(P_h\) onto the FE space \(V^h\) for placing the initial conditions into the FE basis. Specifically, for \(u \in L^2(0,1)\), the projection \(P_h u \in V^h\) is found by solving Equation \eqref{eq:4_L2Proj}:
\begin{equation} \label{eq:4_L2Proj}
    (P_h u, \phi_i)_{L^2} = (u, \phi_i)_{L^2} \ \forall \ i =1, \ldots, M.
\end{equation}
We use \(u_0\) to get \(u^1_h = P_h u_0\). We obtain \(u^2_h\) by first finding a 2nd order accurate approximation \( u^2_2 \) to \(u(x, \Delta t) = u^2 \). We then use the \(L^2\) projection onto the FE basis to get \(u^2_h := P_h u^2_2\) as follows. First, consider this rearranged weak form of the problem in Equation \eqref{eq:4_WeakFormCont}.
\begin{equation} \label{eq:4_ICweakform}
    (u_{tt},v)_{L^2} = - c^2 (u_{x},v_{x})_{L^2} - D (u_{t},v)_{L^2} - G (u_{tx}, v_{x})_{L^2}.
\end{equation}
Performing a Taylor expansion in time of \(u(x,t)\) gives
\begin{equation} \label{eq:4_ICTaylorExp}
    u^2 = u(x, \Delta t) = u(x,0) + \Delta t \, u_t(x,0) + \frac{\Delta t^2}{2} u_{tt}(x, 0) + O(\Delta t^3).
\end{equation}
We drop the \(O(\Delta t^3)\) terms and retain 2nd order accuracy in time to get 
\begin{equation} \label{eq:4_IC2orderExp}
    u^2 \approx u^2_2 := u(x,0) + \Delta t u_t(x,0) + \frac{\Delta t^2}{2} u_{tt}(x, 0).
\end{equation}
Since \(u^2_h := P_h u^2_2\), use Equations \eqref{eq:4_L2Proj} and \eqref{eq:4_ICweakform} with \(u = u^2_2\) to obtain \begin{align}
    (u^2_h, \phi_i)_{L^2} &= (u^2_2, \phi_i)_{L^2} \nonumber\\
    &= (u_0,\phi_i)_{L^2} + \Delta t(u_{00},\phi_i)_{L^2} \\ & \: \ \quad\qquad \qquad - \frac{\Delta t^2}{2}\left( c^2(u_{0x},\phi_{ix})_{L^2} + G(u_{00x},\phi_{ix})_{L^2} + D(u_{00},\phi_{i})_{L^2}\right)\nonumber
\end{align}
for all \(i = 1, \ldots, M\). Solving this system yields the second initial condition. We enforce the zero Dirichlet boundary conditions for each case. With this method we obtain a 2nd order accurate set of initial conditions. 

\subsection{Introducing the ROM}\label{sec:4_ROM}
For the error analysis, we analyze a more general PDE problem, namely the damped wave equation in multiple spacial dimensions. 
Let \(\Omega = \real^d\), for \(d\geq 1\), be an open bounded domain with Lipschitz continuous boundary and define \(V = H^1_0(\Omega)\). The space V is a Hilbert space with inner product \((g,h)_{H^1_0} = (\nabla g, \nabla h)_{L^2}\).

We analyze the following weak form of the wave equation with zero Dirichlet boundary conditions: 
\begin{equation} \label{eq:4_MultiDweakform}
\begin{split}
    (u_{tt},v)_{L^2} + c^2 (\nabla u,\nabla v)_{L^2} &+ D (u_{t},v)_{L^2} + G (\nabla u_{t}, \nabla v)_{L^2} = 0, \quad \forall v\in V,  \\
    u(x,0) &= u_0, \qquad
    u_t(x,0) = u_{00}.
    \end{split}
\end{equation} We use the same time discretization scheme seen in Equation \eqref{eq:4_WeakFormFEscheme} and project onto a standard FE space \(V^h \subset V\):
\begin{align}\label{eq:4_MultiDweakformFEscheme}
 (\p\p u_h^n,v_h)_{L^2} + c^2 (\nabla \widehat{u}^n_{h},\nabla v_{h})_{L^2} &+ D (\p \overline{u}^n_h,v_h)_{L^2} + G (\p \nabla \overline{u}^n_{h}, \nabla v_{h})_{L^2} = 0, \quad \forall v_h \in V^h,
 \end{align}
where \(u_h^1, u^2_h \in V^h\) are given. 
Next we look at the ROM of Equation \eqref{eq:4_MultiDweakformFEscheme} using the data set \(\{u^n\}_{n=1}^N\) to form the POD basis, \(\{\varphi_j\}_{j=1}^r \subset V^h\), with either the standard POD method or the new DDQ approach. In this work, we take the POD space to be \(L^2(\Omega)\) in all cases. Let \(V^h_r = \Span\{\varphi_j\}_{j=1}^r\). Then the POD ROM is 
\begin{align}\label{eq:4_PODROMweakform}
    (\p\p u_r^n,v_r)_{L^2} + c^2 (\nabla \widehat{u}^n_{r},\nabla v_{r})_{L^2} &+ D (\p \overline{u}^n_r,v_r)_{L^2} + G (\p \nabla \overline{u}^n_{r}, \nabla v_{r})_{L^2} = 0 \quad \forall v_r \in V^h_r, \nonumber \\
    u^1_r &= \Pi^X_r u^1_h, \\
    u^2_r &= \Pi^X_r u^2_h. \nonumber
\end{align}

\begin{lem} \label{lem:4_DDQavgIPresults}
Let \(\Delta t > 0\), \(Z\) be an inner product space, and \(\{z^n\}_{n=1}^N \subset Z\). Then for \( n = 2, \ldots, N-1\), we have 
\begin{equation}
    (\p\p z^n, \p \overline{z}^n)_Z = \p\left(\frac{1}{2}\|\pn z^n\|_Z^2 \right)\label{eq:4_DDQDQAvg}
\end{equation}
\begin{equation}
     (\widehat{z}^n, \p \overline{z}^n)_Z = \p\left(\frac{1}{2}\|\overline{z}^n\|_Z^2 \right)\label{eq:4_HatLineAvg}.
\end{equation}
\end{lem}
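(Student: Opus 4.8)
The plan is to prove both identities directly, by unfolding the forward difference on the right-hand side into a difference of two squared norms and then applying the elementary polarization identity
\begin{equation*}
    \|a\|_Z^2 - \|b\|_Z^2 = (a-b,\,a+b)_Z,
\end{equation*}
which holds in any real inner product space (and $Z$ is real here, since the data arise from a real-valued PDE). Both identities share the same structure: the right-hand side is $\p$ applied to half a squared norm, which telescopes into a difference of squared norms, and polarization converts that difference into exactly the inner product appearing on the left, with the factors $a-b$ and $a+b$ turning out to be a second-difference-type quantity and a centered-average-type quantity.

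For \eqref{eq:4_DDQDQAvg}, I would first write out the forward difference of the sequence $\tfrac{1}{2}\|\pn z^n\|_Z^2$ as
\begin{equation*}
    \p\left(\tfrac{1}{2}\|\pn z^n\|_Z^2\right) = \frac{1}{2\Delta t}\left(\|\pn z^{n+1}\|_Z^2 - \|\pn z^n\|_Z^2\right),
\end{equation*}
and use $\pn z^{n+1} = \p z^n$. Setting $a = \p z^n$ and $b = \pn z^n$, a one-line computation gives $a - b = \Delta t\,\p\p z^n$ and $a + b = 2\,\p\overline{z}^n$. Polarization then yields $\|a\|_Z^2 - \|b\|_Z^2 = 2\Delta t\,(\p\p z^n, \p\overline{z}^n)_Z$, and dividing by $2\Delta t$ gives the claim.

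For \eqref{eq:4_HatLineAvg}, the same strategy applies with $a = \overline{z}^{n+1}$ and $b = \overline{z}^n$, where $\overline{z}^n = (z^n + z^{n-1})/2$ and $\widehat{z}^n = (z^{n+1} + 2z^n + z^{n-1})/4$. Unfolding $\p(\tfrac{1}{2}\|\overline{z}^n\|_Z^2)$ produces $\frac{1}{2\Delta t}(\|\overline{z}^{n+1}\|_Z^2 - \|\overline{z}^n\|_Z^2)$, and one checks $a - b = \Delta t\,\p\overline{z}^n$ and $a + b = 2\,\widehat{z}^n$. Polarization again converts this into $2\Delta t\,(\widehat{z}^n, \p\overline{z}^n)_Z$, and dividing by $2\Delta t$ finishes the proof.

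There is essentially no genuine obstacle here: the entire content is the recognition that $\p$ of a squared norm is a telescoping object that polarization turns into an inner product, together with the routine verification of the two factorizations $a \pm b$. The only point demanding care is the real-inner-product hypothesis, which is what makes $\|a\|^2 - \|b\|^2 = (a-b,a+b)$ exact; over a complex space one would instead recover the real part, but that situation does not occur in our setting.
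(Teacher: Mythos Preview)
Your proposal is correct and is essentially the same argument as the paper's, just run in the opposite direction: the paper starts from the inner product on the left, factors it as $(\pn z^{n+1}-\pn z^n,\,\pn z^{n+1}+\pn z^n)_Z$, applies polarization, and recognizes the result as $\p(\tfrac12\|\pn z^n\|_Z^2)$, while you start from the right-hand side and polarize back. The choices of $a$ and $b$ coincide (since $\pn z^{n+1}=\p z^n$), and the paper likewise omits the second identity as ``very similar.''
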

\begin{proof}
    We prove only Equation \eqref{eq:4_DDQDQAvg} as the proof of the Equation \eqref{eq:4_HatLineAvg} is very similar. Notice that
    \begin{align*}
        (\p\p z^n, \p \overline{z}^n)_Z &= \frac{1}{2\Delta t}\left(\frac{z^{n+1} - 2z^n + z^{n-1}}{\Delta t},\frac{z^{n+1} - z^{n-1}}{\Delta t}\right)_Z\\ &= \frac{1}{2\Delta t}\left(\pn z^{n+1} - \pn z^n, \pn z^{n+1} + \pn z^n \right)_Z\\
        &= \frac{1}{2\Delta t}\left(\|\pn z^{n+1}\|^2_Z - \|\pn z^n\|^2_Z\right)\\
        &= \p\left(\frac{1}{2}\|\pn z^n\|_Z^2\right).
    \end{align*}
    % Taking the extreme left hand and right hand sides yields the result.
\end{proof}
It is useful to define an energy quantity for this system. We do so below, and we present an equality governing the discrete time rate of change for the energy. 
\begin{prop} \label{prop:4_EnergyDef}
    For the discrete FE equation \eqref{eq:4_MultiDweakformFEscheme} and POD ROM equation \eqref{eq:4_PODROMweakform}, if the energy is defined by 
    \begin{equation} \label{eq:4_FEenergy}
        E(u_h^n) = \frac{1}{2}\|\pn u_h^n\|_{L^2}^2 + \frac{1}{2}c^2\|\nabla \overline{u}^n_h\|^2_{L^2}
    \end{equation}
    \begin{equation} \label{eq:4_ROMenergy}
        E(u^n_r) = \frac{1}{2}\|\pn u^n_r\|_{L^2}^2 + \frac{1}{2}c^2\|\nabla \overline{u}_r^n\|^2_{L^2},
    \end{equation}
    then they satisfy \begin{equation} \label{eq:4_FEenergyDiff}
    \p E(u_h^n) = - D\|\p \overline{u}^n_h\|^2_{L^2} - G\|\p \nabla \overline{u}_h^n\|^2_{L^2}
    \end{equation}
    \begin{equation} \label{eq:4_ROMenergyDiff}
    \p E(u^n_r) = - D\|\p \overline{u}^n_r\|^2_{L^2} - G\|\p \nabla \overline{u}^n_r\|^2_{L^2},
\end{equation}
respectively.
\end{prop}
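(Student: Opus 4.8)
The plan is to test the discrete FE equation \eqref{eq:4_MultiDweakformFEscheme} with the specific test function \(v_h = \p \overline{u}_h^n\), which is exactly the combination engineered so that Lemma \ref{lem:4_DDQavgIPresults} applies directly to the inertial and stiffness terms while the two damping terms collapse into squared norms. Since \(\p \overline{u}_h^n = (u_h^{n+1} - u_h^{n-1})/(2\Delta t)\) is a fixed linear combination of the \(u_h^j \in V^h\), it is an admissible test function, and the identical choice \(v_r = \p \overline{u}_r^n \in V^h_r\) works for the ROM equation \eqref{eq:4_PODROMweakform}.

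First I would substitute \(v_h = \p \overline{u}_h^n\) into \eqref{eq:4_MultiDweakformFEscheme}, producing four inner product terms. For the inertial term, Equation \eqref{eq:4_DDQDQAvg} of Lemma \ref{lem:4_DDQavgIPresults} with \(z^n = u_h^n\) gives \((\p\p u_h^n, \p \overline{u}_h^n)_{L^2} = \p(\tfrac{1}{2}\|\pn u_h^n\|_{L^2}^2)\). For the stiffness term I would note that \(\nabla\) commutes with every discrete time operator, so that \(\nabla \widehat{u}_h^n = \widehat{\nabla u_h^n}\) and \(\nabla \p \overline{u}_h^n = \p \overline{\nabla u_h^n}\); applying Equation \eqref{eq:4_HatLineAvg} with \(z^n = \nabla u_h^n\) then yields \(c^2(\nabla \widehat{u}_h^n, \nabla \p \overline{u}_h^n)_{L^2} = c^2 \p(\tfrac{1}{2}\|\nabla \overline{u}_h^n\|_{L^2}^2)\).

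Next I would treat the two damping terms by the same commutation observation: the viscous term is simply \(D(\p \overline{u}_h^n, \p \overline{u}_h^n)_{L^2} = D\|\p \overline{u}_h^n\|_{L^2}^2\), and the Kelvin--Voigt term is \(G(\p \nabla \overline{u}_h^n, \nabla \p \overline{u}_h^n)_{L^2} = G\|\p \nabla \overline{u}_h^n\|_{L^2}^2\). Summing the four evaluated terms and using linearity of \(\p\) to merge the first two into \(\p E(u_h^n)\) gives \(\p E(u_h^n) + D\|\p \overline{u}_h^n\|_{L^2}^2 + G\|\p \nabla \overline{u}_h^n\|_{L^2}^2 = 0\), which rearranges to \eqref{eq:4_FEenergyDiff}. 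The ROM identity \eqref{eq:4_ROMenergyDiff} follows verbatim with \(u_h\) replaced by \(u_r\) and \(v_h\) by \(v_r\), using that \(\p \overline{u}_r^n \in V^h_r\).

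The argument is essentially mechanical once the correct test function is identified, so the only real obstacle is recognizing that \(v_h = \p \overline{u}_h^n\) is the choice that simultaneously matches both identities in Lemma \ref{lem:4_DDQavgIPresults} and turns the damping contributions into nonnegative squares. A minor point to record is the commutativity of \(\nabla\) with \(\p\), \(\pn\), \(\overline{(\cdot)}\), and \(\widehat{(\cdot)}\), which holds because these operators are fixed linear combinations of time slices while \(\nabla\) acts only in space.
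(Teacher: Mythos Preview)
Your proposal is correct and follows essentially the same approach as the paper: the paper's proof consists of the single sentence that the result follows directly from Lemma~\ref{lem:4_DDQavgIPresults} together with the choice of test functions \(v_h = \p \overline{u}_h^n\) and \(v_r = \p \overline{u}_r^n\). Your write-up simply unpacks this in more detail, including the commutation of \(\nabla\) with the discrete time operators, which is exactly what is needed.
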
 
The proof follows directly from Lemma \ref{lem:4_DDQavgIPresults} and letting \(v_h = \p \overline{u}_h^n\) and \(v_r = \p \overline{u}_r^n\). One can easily see that if both damping coefficients are zero then the energy is constant, which we expect from an undamped wave equation. 

\subsection{Preliminary Error Analysis}\label{sec:PrelErrorAnalysis}

To analyze the error, we split it in the normal way, as
\begin{equation}\label{eq:4_ErrorDef}
    e^n = u_h^n - u_r^n = (u_h^n - R_r u_h^n) - (u^n_r - R_r u_h^n ) = \eta^n - \phi^n_r
\end{equation}
where \(\eta^n\) is the POD projection error, \(\phi^n_r\) is the discretization error, and \(R_r : V^h \to V^h_r\) is the Ritz projection defined by 
\begin{equation} \label{eq:4_RitzDef}
        (\nabla (w - R_r w), \nabla v_r)_{L^2} = 0
    \end{equation}
    for all \(v_r \in V^h_r\) and any \(w \in V^h\). 
Subtracting Equation \eqref{eq:4_PODROMweakform} from Equation \eqref{eq:4_MultiDweakformFEscheme} and applying Equation \eqref{eq:4_ErrorDef} yields
\begin{align} \label{eq:4_ErrorWeakForm}
    &(\p\p \phi_r^n,v_r)_{L^2} + c^2 (\nabla \widehat{\phi}^n_{r},\nabla v_{r})_{L^2} + D (\p \overline{\phi}^n_r,v_r)_{L^2} + G (\p \nabla \overline{\phi}^n_{r}, \nabla v_{r})_{L^2} \nonumber \\
    & \quad = (\p\p \eta^n,v_r)_{L^2} + c^2 (\nabla \widehat{\eta}^n,\nabla v_{r})_{L^2} + D (\p \overline{\eta}^n,v_r)_{L^2} + G (\p \nabla \overline{\eta}^n, \nabla v_{r})_{L^2} \quad \forall v_r \in V_r^h.
\end{align}
Let \(C_p > 0\) be the constant so that Poincar\'e inequality \(C_p \|\phi \|^2_{L^2} \leq \|\phi \|^2_{H^1_0}\) holds for all \(\phi \in H^1_0(\Omega)\). Lemma \ref{lem:4_phiEnergyBound} proves a bound for the discretization error in terms of the POD data error. It is important for proving the pointwise and energy error bounds in Section \ref{sec:4_ROMErrorBounds}

\begin{lem} \label{lem:4_phiEnergyBound}
Let \(\phi_r^n\) be the discretization error and \(\eta^n\) be the POD data error as defined in Equation \eqref{eq:4_ErrorDef} and let Equation \eqref{eq:4_ErrorWeakForm} define the relationship between \(\phi^n_r\) and \(\eta^n\). Then 
\begin{align}
    \max_{2\leq j \leq N} E(\phi_r^j) &\leq E(\phi_r^2) + \frac{1}{D + 2C_pG} \sum_{n=2}^{N-1} \Delta t\|\p\p \eta^n\|^2_{L^2} + D\sum_{n=2}^{N-1} \Delta t \|\p \overline{\eta}^n\|^2_{L^2} \label{eq:4_phiEnergyBound}
\end{align}
\begin{proof}
    To prove this, first notice that Equation \eqref{eq:4_ErrorWeakForm} with \(v_r = \p \overline{\phi}^n_r\) can be rewritten as 
    \begin{align*}
        &\frac{1}{2}\p E(\phi_r^n) + D\|\p \overline{\phi}^n_r\|^2_{L^2} + G\|\p \nabla \overline{\phi}^n_r\|^2_{L^2} \\
        & \quad \qquad = (\p\p \eta^n,\p \overline{\phi}^n_r)_{L^2} + c^2 (\nabla \widehat{\eta}^n,\p \nabla \overline{\phi}^n_r)_{L^2} + D (\p \overline{\eta}^n,\p \overline{\phi}^n_r)_{L^2} + G (\p \nabla \overline{\eta}^n, \p\nabla  \overline{\phi}^n_r)_{L^2}.
    \end{align*}
   Then the Ritz projection eliminates the \(\eta^n\) gradient terms from the RHS yielding
    \begin{align*}
        \p E(\phi_r^n) &\leq 2(\p\p \eta^n,\p \overline{\phi}^n_r)_{L^2} + 2D (\p \overline{\eta}^n,\p \overline{\phi}^n_r)_{L^2} - 2D\|\p \overline{\phi}^n_r\|^2_{L^2} - 2G\|\p \nabla \overline{\phi}^n_r\|^2_{L^2}.
    \end{align*}
    We then use Cauchy-Schwartz and Young's inequality twice with constants \(\delta_1\) and \(\delta_2\) to obtain
    \begin{align*}
        \p E(\phi_r^n) &\leq \frac{1}{\delta_1}\|\p\p \eta^n\|^2_{L^2} + \delta_1 \|\p \overline{\phi}^n_r\|^2_{L^2} + \frac{D}{\delta_2} \|\p \overline{\eta}^n\|^2_{L^2} + \delta_2 D\|\p \overline{\phi}^n_r\|^2_{L^2}\\ &\quad \qquad - 2D\|\p \overline{\phi}^n_r\|^2_{L^2} - 2G\|\p \nabla \overline{\phi}^n_r\|^2_{L^2}.
    \end{align*}
    Using the fact that \(C_p\|\phi^n_r\|^2_{L^2} \leq \|\nabla \phi^n_r\|^2_{L^2}\), we have \begin{align*}
        \p E(\phi_r^n) \leq \frac{1}{\delta_1}\|\p\p \eta^n\|^2_{L^2} + \frac{D}{\delta_2} \|\p \overline{\eta}^n\|^2_{L^2} + (\delta_1 + \delta_2 D - 2D - 2C_pG)\|\p \overline{\phi}^n_r\|^2_{L^2}.
    \end{align*}
    Setting \(\delta_1 = D + 2C_pG\) and \(\delta_2 = 1\) yields
    \begin{align*}
        \p E(\phi_r^n) \leq \frac{1}{D + 2C_pG}\|\p\p \eta^n\|^2_{L^2} + D \|\p \overline{\eta}^n\|^2_{L^2}.
    \end{align*}
    Finally summing from \(n = 2\) to \(n = j-1\) yields 
    \begin{align*} 
        E(\phi_r^j) &\leq  E(\phi_r^2) + \frac{1}{D + 2C_pG} \sum_{n=2}^{j-1}\Delta t\|\p\p \eta^n\|^2_{L^2} + D \sum_{n=2}^{j-1}\Delta t\|\p \overline{\eta}^n\|^2_{L^2}.
    \end{align*} 
    Take the maximum over all \(j\) to prove the result. 
\end{proof}
\end{lem}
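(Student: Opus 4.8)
The plan is to run a discrete energy estimate directly on the error equation \eqref{eq:4_ErrorWeakForm}, in exactly the same spirit as the energy identity of Proposition \ref{prop:4_EnergyDef}, but now retaining the \(\eta\)-dependent forcing on the right-hand side. Concretely, I would test \eqref{eq:4_ErrorWeakForm} with the admissible choice \(v_r = \p \overline{\phi}^n_r \in V^h_r\). The whole reason for this choice is that Lemma \ref{lem:4_DDQavgIPresults} converts the two leading left-hand terms into a discrete time derivative of the ROM error energy: by \eqref{eq:4_DDQDQAvg} we have \((\p\p \phi_r^n, \p \overline{\phi}^n_r)_{L^2} = \p(\frac{1}{2}\|\pn \phi_r^n\|_{L^2}^2)\), and by \eqref{eq:4_HatLineAvg} applied with \(z^n = \nabla \phi_r^n\) we have \(c^2(\nabla \widehat{\phi}^n_r, \nabla \p \overline{\phi}^n_r)_{L^2} = \p(\frac{c^2}{2}\|\nabla \overline{\phi}^n_r\|_{L^2}^2)\). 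Their sum is precisely \(\p E(\phi_r^n)\), while the two damping terms collapse to the nonnegative dissipation \(D\|\p \overline{\phi}^n_r\|_{L^2}^2 + G\|\p \nabla \overline{\phi}^n_r\|_{L^2}^2\).

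Next I would use the Ritz projection to annihilate every gradient term involving \(\eta\) on the right-hand side. By definition \eqref{eq:4_RitzDef}, \(\eta^n = u_h^n - R_r u_h^n\) satisfies \((\nabla \eta^n, \nabla w_r)_{L^2} = 0\) for all \(w_r \in V^h_r\); since both \(\widehat{\eta}^n\) and \(\p \overline{\eta}^n\) are time-linear combinations of the \(\eta^j\), the terms \(c^2(\nabla \widehat{\eta}^n, \nabla \p \overline{\phi}^n_r)_{L^2}\) and \(G(\p \nabla \overline{\eta}^n, \nabla \p \overline{\phi}^n_r)_{L^2}\) both vanish. This is exactly why the Ritz projection, rather than the \(L^2\) projection, is the correct splitting in \eqref{eq:4_ErrorDef}. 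What survives on the right are only the two \(L^2\) pairings \((\p\p \eta^n, \p \overline{\phi}^n_r)_{L^2}\) and \(D(\p \overline{\eta}^n, \p \overline{\phi}^n_r)_{L^2}\).

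I would then bound the two surviving pairings by Cauchy--Schwarz followed by Young's inequality with free parameters \(\delta_1, \delta_2 > 0\), producing the data-error terms \(\|\p\p \eta^n\|_{L^2}^2\) and \(\|\p \overline{\eta}^n\|_{L^2}^2\) together with a multiple of \(\|\p \overline{\phi}^n_r\|_{L^2}^2\). To close the estimate I would downgrade the \(G\)-dissipation term through the Poincar\'e inequality, \(C_p\|\p \overline{\phi}^n_r\|_{L^2}^2 \leq \|\p \nabla \overline{\phi}^n_r\|_{L^2}^2\), so that both \(D\|\p \overline{\phi}^n_r\|_{L^2}^2\) and \(C_pG\|\p \overline{\phi}^n_r\|_{L^2}^2\) become available to absorb the \(\|\p \overline{\phi}^n_r\|_{L^2}^2\) created by Young's inequality. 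Taking \(\delta_2 = 1\) and \(\delta_1 = D + 2C_pG\) forces the net coefficient of \(\|\p \overline{\phi}^n_r\|_{L^2}^2\) to be nonpositive and leaves the clean one-step inequality \(\p E(\phi_r^n) \leq \frac{1}{D+2C_pG}\|\p\p \eta^n\|_{L^2}^2 + D\|\p \overline{\eta}^n\|_{L^2}^2\). Summing this telescoping inequality in \(n\) (weighted by \(\Delta t\)) from \(n=2\) to \(j-1\) gives \(E(\phi_r^j) \leq E(\phi_r^2) + \cdots\), and maximizing over \(2 \leq j \leq N\) yields \eqref{eq:4_phiEnergyBound}.

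The main obstacle is the bookkeeping in the Young/Poincar\'e step. The forcing and the \(D\)-dissipation are measured in \(L^2\), whereas the \(G\)-dissipation lives in the \(H^1_0\) seminorm, so one must first pass the \(G\)-term through Poincar\'e and then balance all three sources of \(\|\p \overline{\phi}^n_r\|_{L^2}^2\) simultaneously against the two Young terms. The nontrivial point is that this balancing is feasible precisely because \(D\) and \(G\) are never both zero, so \(D + 2C_pG > 0\) and the denominator is legitimate; the parameter choices must be tracked carefully so that the dissipation exactly dominates and nothing spills back onto the error side. Everything else---the Lemma \ref{lem:4_DDQavgIPresults} identities, the Ritz cancellation, and the final telescoping sum---is routine once this balance is fixed.
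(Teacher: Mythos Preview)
Your proposal is correct and follows essentially the same approach as the paper: test \eqref{eq:4_ErrorWeakForm} with \(v_r = \p\overline{\phi}^n_r\), apply Lemma~\ref{lem:4_DDQavgIPresults} to obtain \(\p E(\phi_r^n)\), use the Ritz projection to kill the \(\nabla\eta\) terms, balance via Cauchy--Schwarz/Young and Poincar\'e with the same parameter choices \(\delta_1 = D + 2C_pG\), \(\delta_2 = 1\), then telescope and take the maximum. Your write-up is in fact slightly cleaner than the paper's, which carries a stray factor of \(\tfrac12\) in the first displayed identity before silently correcting it.
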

\begin{remark} It is important to note that in Lemma \ref{lem:4_phiEnergyBound} it is possible for one of \(D\) and \(G\) to be zero but not both. The structure of Theorems \ref{thm:4_eEnergyBound} and \ref{thm:4_ePWBound} does not change if one is zero, only the constant \(C\) changes in both. 
\end{remark}

\subsection{ROM Pointwise and Energy Error Bounds} \label{sec:4_ROMErrorBounds}
In this section, we prove new pointwise and energy error bounds for the POD-ROM. In the following theorems, the value of \(C\) does not depend on any discretization parameters. It does, however, depend on the size of the damping parameters. We will explore the value \(C\) computationally in later sections. 

\begin{theorem} \label{thm:4_eEnergyBound}
    Using the \(L^2(\Omega)\) POD basis, the maximum energy of the error in the POD-ROM is bounded by
    \begin{equation} \label{eq:4_eEnergyBound}
        \max_{2\leq j \leq N} E(e^j) \leq C\Biggl(E(\phi_r^2) + \sum_{k=r+1}^s \lambda^{\mathrm{DDQ}}_k\left(\|\varphi_k - R_r \varphi_k\|^2_{L^2}+ \|\varphi_k - R_r \varphi_k\|^2_{H^1_0}\right)\Biggr).
    \end{equation}
\end{theorem}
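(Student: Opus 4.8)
The plan is to start from the standard error splitting $e^n = \eta^n - \phi_r^n$ in Equation \eqref{eq:4_ErrorDef} and to bound the energy of $e^n$ by the energies of the Ritz projection error $\eta^n$ and the discretization error $\phi_r^n$ separately. Since the energy functional $E$ is built from squared $L^2$ and $H^1_0$ seminorms and since both the backward difference $\pn$ and the backward average $\overline{(\cdot)}$ are linear, the elementary inequality $\|a-b\|^2 \leq 2\|a\|^2 + 2\|b\|^2$ gives $E(e^n) \leq 2E(\eta^n) + 2E(\phi_r^n)$ for every $n$. Taking the maximum over $2 \leq j \leq N$ then reduces the problem to bounding $\max_j E(\eta^j)$ and $\max_j E(\phi_r^j)$ individually, each in terms of the DDQ eigenvalues and the quantities $\|\varphi_k - R_r\varphi_k\|_{L^2}^2$ and $\|\varphi_k - R_r\varphi_k\|_{H^1_0}^2$ appearing in the statement.

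The discretization error is handled by Lemma \ref{lem:4_phiEnergyBound}, which already bounds $\max_j E(\phi_r^j)$ by $E(\phi_r^2)$ plus weighted discrete time integrals of $\|\p\p\eta^n\|_{L^2}^2$ and $\|\p\overline{\eta}^n\|_{L^2}^2$. The structural observation driving the whole proof is that $R_r$ is a bounded linear projection onto $X_r = V^h_r$ and commutes with the difference operators, so that $\p\p\eta^n = \p\p u_h^n - R_r\p\p u_h^n$, $\p\eta^1 = \p u_h^1 - R_r\p u_h^1$, and $\eta^1 = u_h^1 - R_r u_h^1$. Consequently the DDQ extended data error formula \eqref{eq:3_DQPODErrorYpi_r} with $\pi_r = R_r$ applies verbatim; taking $Y = L^2$ gives
\[
\|\eta^1\|_{L^2}^2 + \|\p\eta^1\|_{L^2}^2 + \sum_{j=2}^{N-1}\Delta t\,\|\p\p\eta^j\|_{L^2}^2 = \sum_{k=r+1}^s \lambda_k^{\mathrm{DDQ}}\,\|\varphi_k - R_r\varphi_k\|_{L^2}^2,
\]
and likewise for $Y = H^1_0$. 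The $\|\p\p\eta\|_{L^2}^2$ sum in Lemma \ref{lem:4_phiEnergyBound} is thus bounded directly by the $L^2$ identity above. For the term $\sum_n \Delta t\,\|\p\overline{\eta}^n\|_{L^2}^2$ I would first bound the discrete integral by $T$ times the maximum, apply the pointwise bound \eqref{eq:3_pavgzDDQpwBound} (or \eqref{eq:3_pzDDQpwBound}) from Lemma \ref{lem:3_zDDQErrorBounds} with $z = \eta$ and $Z = L^2$, and then invoke the same $L^2$ identity, which recovers the same eigenvalue sum up to the factor $T\,C_3$.

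For the Ritz projection error I would write $E(\eta^j) = \tfrac12\|\pn\eta^j\|_{L^2}^2 + \tfrac12 c^2\|\overline{\eta}^j\|_{H^1_0}^2$, using $\|\nabla\overline{\eta}^j\|_{L^2} = \|\overline{\eta}^j\|_{H^1_0}$. Then I would apply the pointwise bound \eqref{eq:3_pnzDDQpwBound} with $Z = L^2$ to control $\max_j\|\pn\eta^j\|_{L^2}^2$ and the pointwise bound \eqref{eq:3_zavgDDQpwBound} with $Z = H^1_0$ to control $\max_j\|\overline{\eta}^j\|_{H^1_0}^2$. In each case the right-hand side of the pointwise bound is precisely the left-hand side of the matching DDQ identity (dropping the nonnegative $\|\eta^1\|^2$ term where necessary), so the two maxima are bounded by $C_3\sum_k\lambda_k^{\mathrm{DDQ}}\|\varphi_k - R_r\varphi_k\|_{L^2}^2$ and $C_2\sum_k\lambda_k^{\mathrm{DDQ}}\|\varphi_k - R_r\varphi_k\|_{H^1_0}^2$ respectively. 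Collecting the $\phi_r$ and $\eta$ contributions and absorbing $c^2$, $C_2$, $C_3$, $T$, $D$, and $(D+2C_pG)^{-1}$ into a single constant $C$ independent of the discretization parameters yields the stated inequality.

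The hard part will be the $\sum_n \Delta t\,\|\p\overline{\eta}^n\|_{L^2}^2$ term inherited from Lemma \ref{lem:4_phiEnergyBound}: unlike the $\p\p\eta$ sum it does not appear directly in any DDQ error identity, so it must be routed through a pointwise-in-time bound on the first difference quotient of the average and through the conversion of a discrete time integral into $T$ times a maximum. The only other point demanding care is bookkeeping, namely matching each $\pn\eta$ and $\overline{\eta}$ quantity to the correct $L^2$ versus $H^1_0$ version of the DDQ identity and of Lemma \ref{lem:3_zDDQErrorBounds}, so that the final constant collects cleanly and the claimed $L^2$- and $H^1_0$-seminorm structure on the right-hand side emerges.
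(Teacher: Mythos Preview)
Your proposal is correct and follows essentially the same route as the paper's proof: split $e^n=\eta^n-\phi_r^n$, bound $E(e^n)$ by the $\eta$- and $\phi_r$-contributions, invoke Lemma~\ref{lem:4_phiEnergyBound} for $\max_j E(\phi_r^j)$, handle the $\sum_n\Delta t\|\p\overline{\eta}^n\|_{L^2}^2$ term by passing to $T$ times the maximum and then applying Lemma~\ref{lem:3_zDDQErrorBounds}, and close with the DDQ identity \eqref{eq:3_DQPODErrorYpi_r} taking $\pi_r=R_r$ in both the $L^2$ and $H^1_0$ norms. The only cosmetic difference is that the paper expands $E(e^n)$ term by term rather than writing $E(e^n)\le 2E(\eta^n)+2E(\phi_r^n)$, but the ingredients and their order of use are identical.
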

\begin{proof}
    Using the energy definition in Proposition \ref{prop:4_EnergyDef},
    \begin{align*}
        E(e^n) &= \frac{1}{2}\left(\|\pn e^n\|^2_{L^2} + c^2\|\overline{e}^n\|^2_{H^1_0}\right) \\ &\leq \|\pn \eta^n\|^2_{L^2} + c^2\|\overline{\eta}^n\|^2_{H^1_0} + \|\pn \phi_r^n\|^2_{L^2} + c^2\|\overline{\phi}_r^n\|^2_{H^1_0}. 
    \end{align*}
    By Lemma \ref{lem:4_phiEnergyBound},
    \begin{align*}
        E(e^n) &\leq E(\phi_r^2) + \|\pn \eta^n\|^2_{L^2} + c^2\|\overline{\eta}^n\|^2_{H^1_0} + \frac{1}{D + 2C_pG} \sum_{i=2}^{N-1} \Delta t\|\p\p \eta^i\|^2_{L^2} + D\sum_{i=2}^{N-1} \Delta t \|\p \overline{\eta}^i\|^2_{L^2} \\ &\leq E(\phi_r^2) + \|\pn \eta^n\|^2_{L^2} + c^2\|\overline{\eta}^n\|^2_{H^1_0} + \frac{1}{D + 2C_pG} \sum_{i=2}^{N-1} \Delta t\|\p\p \eta^i\|^2_{L^2} + TD \max_i \|\p \overline{\eta}^i\|^2_{L^2}.
    \end{align*}
    Next, apply the results of Lemma \ref{lem:3_zDDQErrorBounds} to yield
    \begin{align*}
        \max_n E(e^n) &\leq E(\phi^2_r) + C_3 \left(\|\p \eta^1\|^2_{L^2}+ \sum_{i=2}^{N-1} \Delta t \|\p\p \eta^i\|^2_{L^2} \right) 
        \\ &  \qquad + c^2 C_2\left(\| \eta^1\|^2_{H^1_0} + \|\p \eta^1\|^2_{H^1_0} + \sum_{i=2}^{N-1} \Delta t \|\p\p \eta^i\|^2_{H^1_0}\right) 
        \\ & \qquad  +\frac{1}{D + 2C_pG} \sum_{i=2}^{N-1} \Delta t\|\p\p \eta^i\|^2_{L^2} + TDC_3\left( \|\p \eta^1\|^2_{L^2} + \sum_{i=2}^{N-1} \Delta t \|\p\p \eta^i\|^2_{L^2}\right).
    \end{align*}
    Adding in positive terms and combining like terms yields
    \begin{align*}
        \max_n E(e^n) &\leq \left(C_3 + DTC_3 + \frac{1}{D + 2C_pG}\right) \left(\| \eta^1\|^2_{L^2}+\|\p \eta^1\|^2_{L^2}+ \sum_{i=2}^{N-1} \Delta t \|\p\p \eta^i\|^2_{L^2} \right) 
        \\ & \qquad + c^2 C_2\left(\| \eta^1\|^2_{H^1_0} + \|\p \eta^1\|^2_{H^1_0} + \sum_{i=2}^{N-1} \Delta t \|\p\p \eta^i\|^2_{H^1_0}\right) + E(\phi_r^2).
    \end{align*}
    Finally applying Lemma \ref{lem:3_DDQPODErrorFormulas}, we have that for some constant \(C\) 
    \begin{align*}
        \max_n E(e^n) &\leq C\Biggl(E(\phi_r^2) + \sum_{k=r+1}^s \lambda^{\mathrm{DDQ}}_k\left(\|\varphi_k - R_r \varphi_k\|^2_{L^2}+ \|\varphi_k - R_r \varphi_k\|^2_{H^1_0}\right) \Biggr).
    \end{align*}
    Thus Equation \eqref{eq:4_eEnergyBound} is proven. 
\end{proof}

\begin{theorem} \label{thm:4_ePWBound}
    Using the \(L^2(\Omega)\) POD basis, the maximum pointwise error for the POD-ROM is bounded by 
    \begin{align} \label{eq:4_ePWBound}
        \max_{1 \leq j \leq N} \|e^j\|^2_{L^2} &\leq C\left(\|\phi^1_r\|^2_{L^2} + E(\phi_r^2) + \sum_{k=r+1}^s \lambda^{\mathrm{DDQ}}_k \|\varphi_k - R_r \varphi_k\|^2_{L^2}\right)
    \end{align}
\end{theorem}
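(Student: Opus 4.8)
The plan is to estimate the full error through its standard split $e^n = \eta^n - \phi^n_r$ from Equation \eqref{eq:4_ErrorDef}, where $\eta^n = u_h^n - R_r u_h^n$ is the Ritz projection error and $\phi^n_r$ is the discretization error. Using $(a-b)^2 \le 2a^2 + 2b^2$ gives $\|e^n\|^2_{L^2} \le 2\|\eta^n\|^2_{L^2} + 2\|\phi^n_r\|^2_{L^2}$, so it suffices to bound $\max_j\|\eta^j\|^2_{L^2}$ and $\max_j\|\phi^j_r\|^2_{L^2}$ separately and then collect all resulting constants into a single $C$. Note that $C$ will depend on $T$, $c$, $C_p$, $D$, and $G$, but none of these are discretization parameters, consistent with the claim preceding the theorem.

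The projection-error term is immediate. Since $R_r$ is a bounded linear projection of $V^h$ onto $X_r = V^h_r$, I would apply the pointwise DDQ data bound \eqref{eq:3_dataDDQErrorBoundYpi_r} of Theorem \ref{thm:3_dataDDQErrorBounds} with $\pi_r = R_r$ and $Y = L^2(\Omega)$ to the snapshots $\{u_h^j\}$, giving directly $\max_j\|\eta^j\|^2_{L^2} \le C\sum_{k=r+1}^s \lambda^{\mathrm{DDQ}}_k\|\varphi_k - R_r\varphi_k\|^2_{L^2}$, which is exactly the last term on the right-hand side of \eqref{eq:4_ePWBound}.

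The discretization-error term is where the real work lies, and it is the main obstacle: the energy $E(\phi^n_r)$ controls only the discrete velocity $\pn \phi^n_r$ and the averaged gradient $\nabla \overline{\phi}^n_r$, and not $\|\phi^n_r\|_{L^2}$ itself. To recover a pointwise bound on $\phi^n_r$ I would deliberately avoid the second-difference representation \eqref{eq:3_zDDQpwBound} (which would demand control of $\p\p \phi^n_r$, a quantity we have no handle on) and instead use the first-difference pointwise bound of Lemma \ref{lem:2_DQPointwiseErrorBounds} applied to $z^j = \phi^j_r$ in $Z = L^2$, namely $\max_j\|\phi^j_r\|^2_{L^2} \le C_1(\|\phi^1_r\|^2_{L^2} + \sum_{\ell=1}^{N-1}\Delta t\,\|\p \phi^\ell_r\|^2_{L^2})$. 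The crucial observation is that each first difference quotient is a discrete velocity: $\p \phi^\ell_r = \pn \phi^{\ell+1}_r$, so $\|\p \phi^\ell_r\|^2_{L^2} \le 2E(\phi^{\ell+1}_r) \le 2\max_j E(\phi^j_r)$, and the $\Delta t$-weighted sum over $\ell$ contributes a factor of at most $2T$. This converts the accumulated-velocity term into $2T\max_j E(\phi^j_r)$, yielding $\max_j\|\phi^j_r\|^2_{L^2} \le C_1\bigl(\|\phi^1_r\|^2_{L^2} + 2T\max_j E(\phi^j_r)\bigr)$.

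It then remains to bound $\max_j E(\phi^j_r)$, which is precisely the content of Lemma \ref{lem:4_phiEnergyBound}: $\max_j E(\phi^j_r) \le E(\phi^2_r) + \frac{1}{D+2C_pG}\sum_{n=2}^{N-1}\Delta t\,\|\p\p \eta^n\|^2_{L^2} + D\sum_{n=2}^{N-1}\Delta t\,\|\p \overline{\eta}^n\|^2_{L^2}$. The first sum is controlled directly by the DDQ error formula \eqref{eq:3_DQPODErrorYpi_r} of Lemma \ref{lem:3_DDQPODErrorFormulas} (with $\pi_r = R_r$, $Y = L^2$). For the second sum I would bound $\|\p \overline{\eta}^n\|^2_{L^2} \le \max_\ell \|\p \eta^\ell\|^2_{L^2}$ and then invoke \eqref{eq:3_pzDDQpwBound} from Lemma \ref{lem:3_zDDQErrorBounds} together with the same DDQ formula; the factor $\sum_{n=2}^{N-1}\Delta t \le T$ keeps everything finite. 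Substituting these estimates back, combining with the $\eta^n$ bound, and absorbing $C_1$, $C_2$, $C_3$, $T$, $c$, $C_p$, $D$, and $G$ into a single constant $C$ produces \eqref{eq:4_ePWBound}. The one point requiring care is the degenerate case in which one of $D$, $G$ vanishes (but not both): the coefficient $\frac{1}{D+2C_pG}$ remains finite and the $D$-weighted sum either persists or drops out, so the structure of the bound is unchanged and only the value of $C$ is affected.
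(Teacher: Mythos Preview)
Your proposal is correct and follows essentially the same route as the paper: split $e^n=\eta^n-\phi^n_r$, use Lemma~\ref{lem:2_DQPointwiseErrorBounds} on $\phi^n_r$, control $\|\p\phi^\ell_r\|_{L^2}^2$ via $E(\phi^{\ell+1}_r)$ and then Lemma~\ref{lem:4_phiEnergyBound}, bound the resulting $\eta$-terms by Lemma~\ref{lem:3_zDDQErrorBounds} and the DDQ error formulas, and finish with Theorem~\ref{thm:3_dataDDQErrorBounds} for $\max_j\|\eta^j\|_{L^2}^2$. The only cosmetic difference is that the paper invokes Lemma~\ref{lem:3_zDDQErrorBounds} followed by Lemma~\ref{lem:3_DDQPODErrorFormulas} where you cite the packaged Theorem~\ref{thm:3_dataDDQErrorBounds}, which is equivalent.
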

\begin{proof}
    By Lemma \ref{lem:2_DQPointwiseErrorBounds},
    \begin{align*}
        \max_n \|\phi^n\|^2_{L^2} &\leq C_1\left(\|\phi^1\|^2_{L^2} + \sum_{k = 2}^{N} \Delta t\|\p^-\phi^k_r\|^2_{L^2}\right)\\
        & \leq C_1\left(\|\phi^1\|^2_{L^2} + T\max_k \|\p^- \phi^k_r\|^2_{L^2}\right).
    \end{align*}
    And by Lemma \ref{lem:4_phiEnergyBound}, 
    \begin{align*}
        \max_n \|\phi^n\|^2_{L^2} &\leq C_1\Biggl(\|\phi^1_r\|^2_{L^2} + T\left(\|\pn \phi^2_r\|_{L^2}^2 + c^2 \| \overline{\phi}^2_{r}\|_{H^1_0}^2\right) \\& \quad \qquad +\frac{T}{D + 2C_pG} \sum_{i=2}^{N-1} \Delta t\|\p\p \eta^i\|^2_{L^2} + TD\sum_{i=2}^{N-1} \Delta t \|\p \overline{\eta}^i\|^2_{L^2}\Biggr).
    \end{align*}
    Using Lemma \ref{lem:3_zDDQErrorBounds}, we have
    \begin{align*}
        \max_n \|\phi^n\|^2_{L^2} &\leq C_1\Biggl(\|\phi^1_r\|^2_{L^2} + T\left(\|\pn \phi^2_r\|_{L^2}^2 + c^2 \| \overline{\phi}^2_{r}\|_{H^1_0}^2\right) \\& \quad \qquad +\frac{T}{D + 2C_pG} \sum_{i=2}^{N-1} \Delta t\|\p\p \eta^i\|^2_{L^2} + T^2DC_3\left( \|\p \eta^1\|^2_{L^2} + \sum_{i=2}^{N-1} \Delta t \|\p\p \eta^i\|^2_{L^2}\right) \Biggr).
    \end{align*}
    Since
    \begin{align*}
        \|e^n\|^2_{L^2} \leq 2\|\eta^n\|^2_{L^2} + 2\|\phi^n_r\|^2_{L^2},
    \end{align*}
    we have that 
    \begin{align*}
        \max_n \|e^n\|^2_{L^2} &\leq 2\left(C_2 + T^2DC_1C_3 + \frac{TC_1}{D+2C_pG}\right)\left(\| \eta^1\|^2_{L^2}+\|\p \eta^1\|^2_{L^2}+ \sum_{i=2}^{N-1} \Delta t \|\p\p \eta^i\|^2_{L^2} \right) \\
        & \quad \qquad +2 C_1\|\phi^1_r\|^2_{L^2} + 2TC_1E(\phi^2_r).
    \end{align*}
    Applying Lemma \ref{lem:3_DDQPODErrorFormulas}, we have that for some constant \(C\) 
    \begin{align*}
        \max_{1 \leq j \leq N} \|e^j\|^2_{L^2} &\leq C\left(\|\phi^1_r\|^2_{L^2} + E(\phi^2_r) + \sum_{k=r+1}^s \lambda^{\mathrm{DDQ}}_k \|\varphi_k - R_r \varphi_k\|^2_{L^2}\right).
    \end{align*}
\end{proof}

\section{Computational Results} \label{sec:5_all}

In this section, we present numerous computational results. Section \ref{sec:5_PODdataComps} covers results exploring the singular values for the Standard POD method and the DDQ approach. We also verify the data error formulas for both methods. In Section \ref{sec:5_DDQPODROM Bounds}, we explore the bounds from Theorems \ref{thm:4_eEnergyBound} and \ref{thm:4_ePWBound}, and compare the performance of the ROM when using Standard POD and DDQ POD in Section \ref{sec:5_StVSDDQROM}. Finally, in Section \ref{sec:5_ReducedInterval}, we perform exploratory computations for the accuracy of the ROM when including only part of the interval to collect the data.

We also present the differing behaviors of the two types of damping we considered in the error analysis. In all computations only one damping constant is nonzero at a time. In Sections \ref{sec:5_PODdataComps} and \ref{sec:5_ReducedInterval}, we choose one value of each damping parameter to show results comparing the two. For the viscous damping, we choose \(D = 0.1\) as the test value and for the Kelvin-Voigt damping, we choose \(G = 0.001\). At these values each damping has a visible effect on the time evolution of the wave. The way they interact with both methods of POD leads to different singular value decays and how many POD basis functions are required for accurate approximation. In Section \ref{sec:5_DDQPODROM Bounds}, we present results for the scaling factor in Theorems \ref{thm:4_eEnergyBound} and \ref{thm:4_ePWBound}  and in Section \ref{sec:5_StVSDDQROM}we explore the magnitude of the energy and pointwise errors for the two methods for a range of damping values.

\subsection{POD Data Computations} \label{sec:5_PODdataComps}

Here, we present computational results verifying the POD data error formulas for Standard POD and the DDQ POD method. In all examples and computations provided, we use \(X = L^2(0,1)\). We let \(T = 10\), \(\Delta t = \frac{1}{800}\), and choose 400 finite element nodes. We found that changing the total number of finite element nodes did not have a large effect on the performance of POD. 

To get the data \(\{u^j\}\), we compute the FE solution with the chosen initial condition. For the standard POD computations we choose \(\gamma_j = \Delta t\) for all \(j = 1, \ldots, N\). To compute the SVD of the POD operator, we use the method described in Section 2.2 of \cite{Fareed}. We make small modifications to the scaling of the data due to the POD weights. 

In Figure \ref{fig:5.1}, we can see that the singular value decay when \(D = 0.1\) and \(G=0\) is very slow for both methods, but slightly slower for the DDQ POD. The magnitude of the singular values is also larger for that method.

\begin{figure}[htb]
\centering
\begin{subfigure}{.5\textwidth}
  \centering
  \includegraphics[width=1\linewidth]{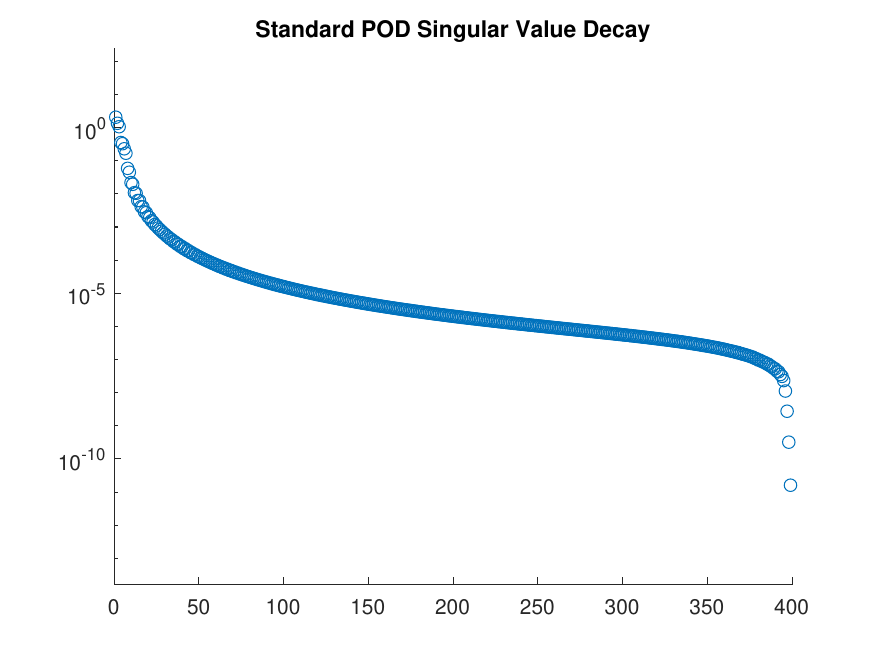}
  \caption{Standard POD}
  \label{fig:sub1}
\end{subfigure}%
\begin{subfigure}{.5\textwidth}
  \centering
  \includegraphics[width=1\linewidth]{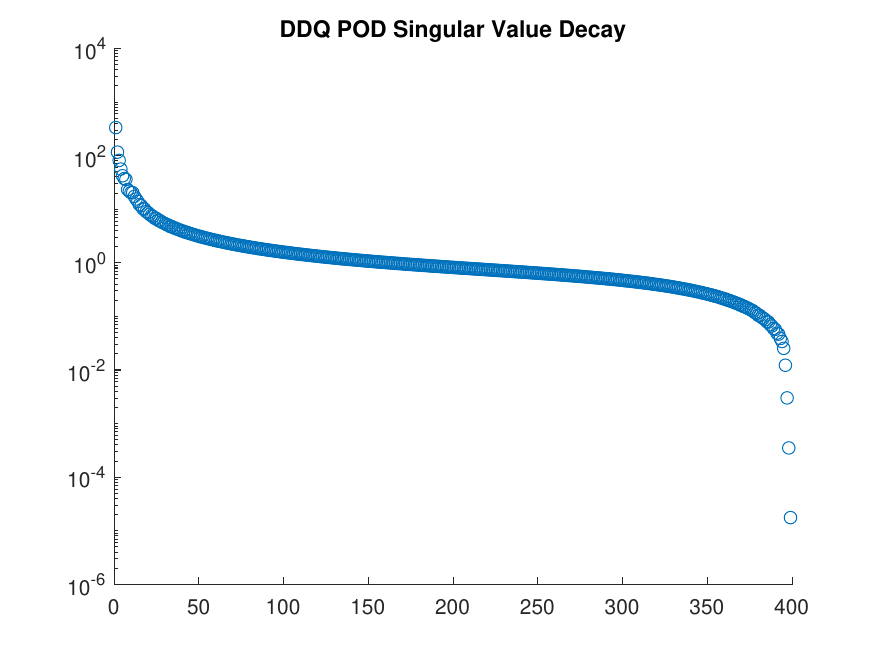}
  \caption{DDQ POD}
  \label{fig:sub2}
\end{subfigure}
\caption{POD singular values for \(D = 0.1\) and \(G = 0\)}
\label{fig:5.1}
\end{figure}

Figure \ref{fig:5.2} shows the contrasting behavior of the singular values when \(G = 0.001\) and \(D = 0\). Note that in \ref{fig:5.2}, we only plot the first 75 singular values. This is due to them leveling off at numerical round off errors at around \(10^{-10}\). The Kelvin-Voigt damping term has a much stronger effect on the information content than the viscous damping term. For both types of damping, we see the Standard POD method has a slightly faster decay for the singular values.

\begin{figure}[htb]
\centering
\begin{subfigure}{.5\textwidth}
  \centering
  \includegraphics[width=1\linewidth]{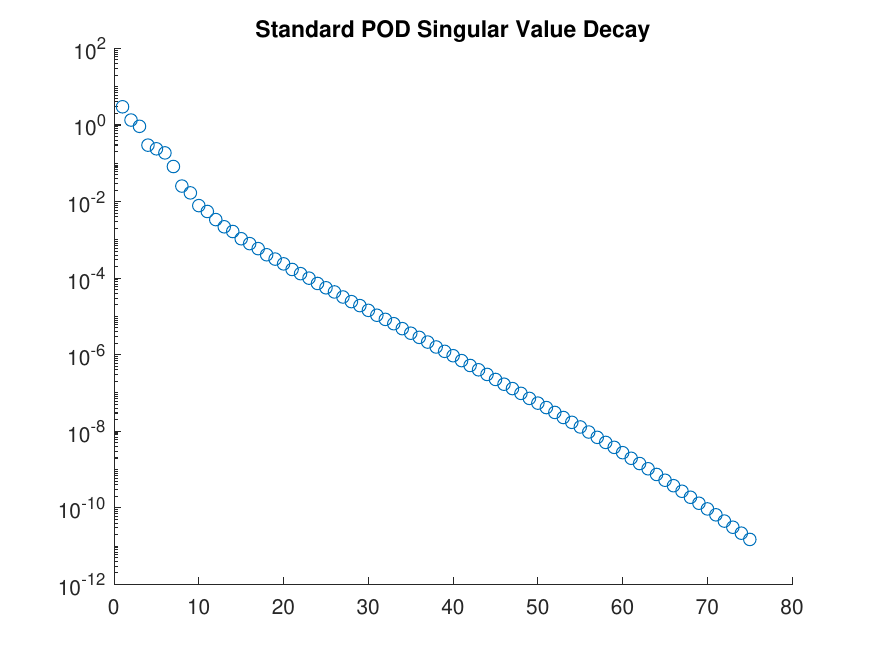}
  \caption{Standard POD}
  \label{fig:sub1}
\end{subfigure}%
\begin{subfigure}{.5\textwidth}
  \centering
  \includegraphics[width=1\linewidth]{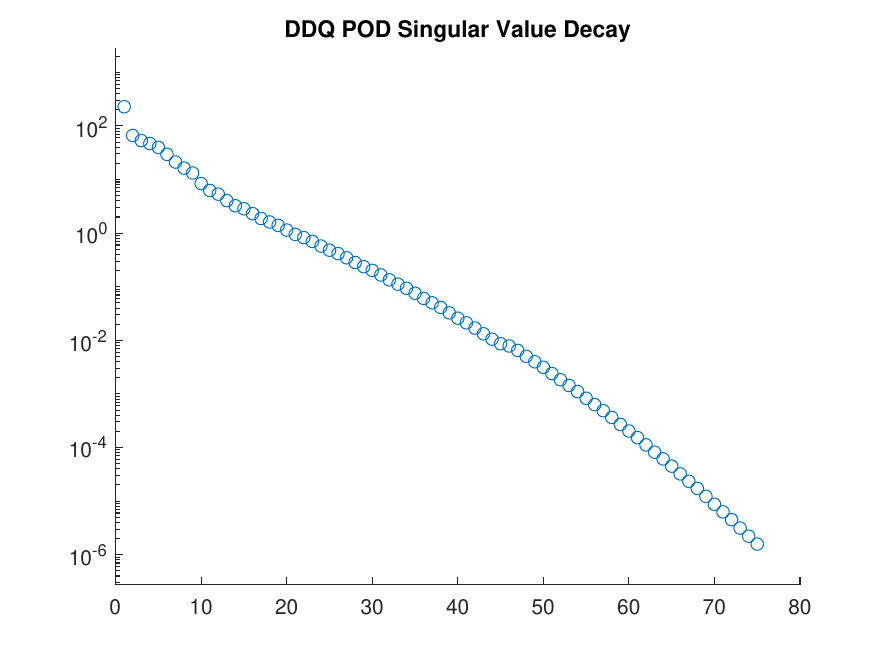}
  \caption{DDQ POD}
  \label{fig:sub2}
\end{subfigure}
\caption{POD singular values when \(G = 0.001\) and \(D = 0\)}
\label{fig:5.2}
\end{figure}

Tables \ref{tab:5.1} and \ref{tab:5.2} show the POD data error formulas from Sections \ref{sec:2_StPOD} and \ref{sec:3_DDQPODmethod} being applied when \( G = 0.001\) and \( D = 0\). The data errors are computed with respect to the given norm. The singular value errors are computed with the right hand side in Equation \eqref{eq:2_StPODErrorFormula} and Lemmas \ref{lem:2_StPODExtendedErrors} and \ref{lem:3_DDQPODErrorFormulas}. For example, the last two columns of Table \ref{tab:5.1} are computed as 
\begin{equation}
    H^1_0 \text{ Actual Error } = \sum_{j=1}^N \Delta t\|u^j - \Pi^X_r u^j\|^2_{H^1_0}, \qquad H^1_0 \text{ Error Formula } = \sum_{k = r + 1}^s \lambda_k\|\varphi_k\|^2_{H^1_0}
\end{equation}
with \(Y = H^1_0\) in Lemma \ref{lem:2_StPODExtendedErrors}. The results in Tables \ref{tab:5.1} and \ref{tab:5.2} are accurate up to many decimal places verifying the data error formulas.
\begin{table}[htb]
	\renewcommand{\arraystretch}{1.25}
	\begin{center}	
		\begin{tabular}{c|c|c|c|c}
			\hline
			\(r\) value & Equation & Error Norm & Actual Error & Error Formula \\
			\hline
			10 & \eqref{eq:2_StPODErrorFormula}  & $X=L^2(\Omega)$ & 5.18E-05 & 5.18E-05 \\
			& \eqref{eq:2_StPODErrorY} & $Y=H_0^1(\Omega)$ & 7.46E-02 & 7.46E-02 \\
			\hline
			20 & \eqref{eq:2_StPODErrorFormula} &  $X=L^2(\Omega)$ & 6.82E-08 & 6.82E-08 \\
			& \eqref{eq:2_StPODErrorY} & $Y=H_0^1(\Omega)$ & 4.72E-04 & 4.72E-04 \\
			\hline
			40 & \eqref{eq:2_StPODErrorFormula} & $X=L^2(\Omega)$ & 1.14E-12 & 1.14E-12 \\
			& \eqref{eq:2_StPODErrorY} & $Y=H_0^1(\Omega)$ & 8.57E-08 & 8.57E-08 \\
			\hline
            60 & \eqref{eq:2_StPODErrorFormula} & $X=L^2(\Omega)$ & 8.15E-18 & 8.15E-18 \\
			& \eqref{eq:2_StPODErrorY} & $Y=H_0^1(\Omega)$ & 2.10E-12 & 2.10E-12 \\
			\hline
		\end{tabular}
		\caption{ \label{tab:5.1} Actual error versus error formulas from Equation \eqref{eq:2_StPODErrorFormula} and Lemma \ref{lem:2_StPODExtendedErrors} for standard POD with $X = L^2(\Omega)$, \(D = 0\), and \(G=0.001\).}
	\end{center}
\end{table}
\begin{table}[htb]
	\renewcommand{\arraystretch}{1.25}
	\begin{center}	
		\begin{tabular}{c|c|c|c|c}
			\hline
			\(r\) value & Equation & Error Norm & Actual Error & Error Formula \\
			\hline
			10 & \eqref{eq:3_DQPODError}  & $X=L^2(\Omega)$ & 1.20E+02 & 1.20E+02 \\
			& \eqref{eq:3_DQPODErrorY} & $Y=H_0^1(\Omega)$ & 3.11E+05 & 3.11E+05 \\
			\hline
			20 & \eqref{eq:3_DQPODError} &  $X=L^2(\Omega)$ & 3.17 & 3.17 \\
			& \eqref{eq:3_DQPODErrorY} & $Y=H_0^1(\Omega)$ & 5.21E+04 & 5.21E+04 \\
			\hline
			40 & \eqref{eq:3_DQPODError} & $X=L^2(\Omega)$ & 1.26E-03 & 1.26E-03 \\
			& \eqref{eq:3_DQPODErrorY} & $Y=H_0^1(\Omega)$ & 2.36E+02 & 2.36E+02 \\
			\hline
            60 & \eqref{eq:3_DQPODError} & $X=L^2(\Omega)$ & 5.06E-08 & 5.06E-08 \\
			& \eqref{eq:3_DQPODErrorY} & $Y=H_0^1(\Omega)$ & 1.22E-02 & 1.22E-02 \\
			\hline
		\end{tabular}
		\caption{ \label{tab:5.2} Actual error versus error formulas from Lemma \ref{lem:3_DDQPODErrorFormulas} for DDQ POD with $X = L^2(\Omega)$, \(D = 0\), and \(G=0.001\).}
	\end{center}
\end{table}

The results for \(D = 0.1\) and \(G = 0\) were similar, albeit much larger for both methods of POD, and are not presented. The difference in magnitude of the singular values between the two POD approaches is likely due to the magnitude of the norm for the 2nd difference quotients. This was also seen in \cite{Herkt}.

\subsection{ROM Computations}\label{sec:5_ROMcomps}

We split this section into three parts. The first covers the ROM error bounds from Section \ref{sec:4_ROMErrorBounds} and the second compares standard POD to DDQ POD for the ROM construction by considering the maximum energy errors and \(L^2\) pointwise errors. 

In the third, we keep the same testing interval \([0, 10]\), but we reduce the training interval where we collect the snapshots to \([0, T_t]\) where \(T_t \leq 10\). We then simulate the POD ROM over the entire test interval and compare the final time errors for each method of POD. This tests the long term accuracy of the ROMs for simulating into the future. 

\subsubsection{DDQ POD ROM Error Bounds} \label{sec:5_DDQPODROM Bounds}

First, we explore the actual size of the constants in Theorems \ref{thm:4_eEnergyBound} and \ref{thm:4_ePWBound} using the DDQ POD method. We do this at various values for each of the damping parameters and at different values of \(r\). In each of these tests, only one damping parameter is nonzero.

For Theorem \ref{thm:4_eEnergyBound}, we calculate the scaling factor with
\begin{equation}
    C = \frac{\max_n E(e^n)}{\left(E(\phi^2_r) + \sum_{k=r+1}^s \lambda^{\mathrm{DDQ}}_k\left(\|\varphi_k - R_r \varphi_k\|^2_{L^2}+ \|\varphi_k - R_r \varphi_k\|^2_{H^1_0}\right)\right)}.
\end{equation}
The results are shown in Table \ref{tab:5.3}. The wide range of damping values allows us to see a few patterns emerge for each type. As we vary the viscous damping parameter, \(D\), the scaling factor is remarkably stable whereas the Kelvin-Voigt damping, \(G\), exhibits similar behavior but seems to have two different scales. For \(G\leq 0.01\), the scaling factor is stable for each \(r\) value. However, when \(G \geq 0.01\) we start to see large changes in the scaling factor. This may be due to the number of oscillatory modes decreasing to less than 32 when \(G = 0.01\) decreasing the magnitude of the bound.

\begin{table}[htb]
  \begin{center}       
       \begin{tabular}{c|c|c||c|c|c}
\hline     D       & \(r = 20\) & \(r = 40\)& G     & \(r = 10\) & \(r = 20\) \\ \hline
0.00001 & 1.43-E-09  & 5.48E-10 & 0.00001 & 7.21E-08   & 4.95E-09   \\ \hline
0.0001  & 1.43-E-09  & 5.46E-10 & 0.0001  & 1.35E-07   & 3.02E-08   \\ \hline
0.001   & 1.38-E-09  & 5.32E-10 & 0.001   & 4.44E-07   & 8.25E-07   \\ \hline
0.01    & 1.04-E-09  & 4.42E-10 & 0.01    & 5.12E-06   & 1.21E-03   \\ \hline
0.1     & 7.66-E-10  & 6.53E-10 & 0.1     & 1.12E-03   & 5.30E-05   \\ \hline
       \end{tabular}
  \end{center}
  \caption{Scaling factor for Theorem \ref{thm:4_eEnergyBound} at various damping values.}
\label{tab:5.3}
\end{table}

For Theorem \ref{thm:4_ePWBound}, we calculate the scaling factor with \begin{equation}
    C = \frac{\max_n \|e^n\|^2_{L^2}}{\left(\|\phi^1_r\|^2_{L^2} + E(\phi^2_r) + \sum_{k=r+1}^s \lambda^{\mathrm{DDQ}}_k \|\varphi_k - R_r \varphi_k\|^2_{L^2}\right)}.
\end{equation}
For the pointwise \(L^2\) error, the scaling factor is once again very stable for the viscous damping. The Kelvin-Voigt damping shows better stability within two or three magnitudes compared to the variability for the energy bounds.

\begin{table}[htb]
  \begin{center}       
       \begin{tabular}{c|c|c||c|c|c}
\hline     D       & \(r = 20\) & \(r = 40\) & G       & \(r = 10\) & \(r = 20\) \\ \hline
0.00001 & 4.94E-06   & 2.75E-06   & 0.00001 & 5.53E-06   & 2.09E-03   \\ \hline
0.0001  & 4.92E-06   & 2.74E-06   & 0.0001  & 5.19E-07   & 3.20E-03   \\ \hline
0.001   & 4.73E-06   & 2.65E-06   & 0.001   & 1.09E-06   & 1.09E-01   \\ \hline
0.01    & 3.15E-06   & 1.90E-06   & 0.01    & 1.49E-05   & 3.82E-01   \\ \hline
0.1     & 1.79E-07   & 3.36E-07   & 0.1     & 1.68E-05   & 2.96E-02   \\ \hline
       \end{tabular}
  \end{center}
  \caption{Scaling factor for Theorem \ref{thm:4_ePWBound} at various damping values.}
\label{tab:5.4}
\end{table}

\subsubsection{Standard POD ROM versus DDQ POD ROM}\label{sec:5_StVSDDQROM}

Next, we compare the errors of the Standard POD ROM to the DDQ POD ROM and show graphs of the solution throughout the time interval. We once again set one damping parameter to be nonzero at a time. 
For the viscous damping, we analyze the errors at \(r = 20\) in Table \ref{tab:5.5}. This value gave good results for both POD methods.

The viscous damping showed similar behavior to the scaling factors. Both errors are remarkably static as \(D\) increases for standard POD. The DDQ POD had more interesting behavior as it started off stable when \(D\) was very small and got much more accurate as \(D\) increased.

\begin{table}[htb]
\begin{center}
\begin{tabular}{ccc||cc}
%\cline{2-5}
                             & \multicolumn{2}{c||}{\(L^2\) Error}           & \multicolumn{2}{c}{Energy Error}             \\ \hline
\multicolumn{1}{c|}{D}       & \multicolumn{1}{c|}{Standard POD} & DDQ POD  & \multicolumn{1}{c|}{Standard POD} & DDQ POD  \\ \hline
\multicolumn{1}{c|}{0.00001} & \multicolumn{1}{c|}{7.19E-06}     & 1.53E-02 & \multicolumn{1}{c|}{2.87E-03}     & 2.83E-01 \\ \hline
\multicolumn{1}{c|}{0.0001}  & \multicolumn{1}{c|}{7.19E-06}     & 1.52E-02 & \multicolumn{1}{c|}{2.87E-03}     & 2.82E-01 \\ \hline
\multicolumn{1}{c|}{0.001}   & \multicolumn{1}{c|}{7.18E-06}     & 1.44E-02 & \multicolumn{1}{c|}{2.87E-03}     & 2.72E-01 \\ \hline
\multicolumn{1}{c|}{0.01}    & \multicolumn{1}{c|}{7.04E-06}     & 8.68E-03 & \multicolumn{1}{c|}{2.86E-03}     & 1.89E-01 \\ \hline
\multicolumn{1}{c|}{0.1}     & \multicolumn{1}{c|}{6.73E-06}     & 2.23E-04 & \multicolumn{1}{c|}{2.78E-03}     & 7.36E-02 \\ \hline
\end{tabular}
\end{center}
\caption{Maximum \(L^2\) and energy errors for the standard POD ROM versus the DDQ POD ROM}
\label{tab:5.5}
\end{table}

For the Kelvin-Voigt damping parameter, we were able to use \(r = 10\) and get good results in Table \ref{tab:5.6}. The behavior of both methods is much less consistent here. Standard POD gets much better for both errors as \(G\) gets larger. On the other hand, the DDQ method increases in accuracy significantly slower than standard POD. 
\begin{table}[htb]
\begin{center}
\begin{tabular}{ccc||cc}
%\cline{2-5}
                             & \multicolumn{2}{c||}{\(L^2\) Error}           & \multicolumn{2}{c}{Energy Error}             \\ \hline
\multicolumn{1}{c|}{G}       & \multicolumn{1}{c|}{Standard POD} & DDQ POD  & \multicolumn{1}{c|}{Standard POD} & DDQ POD  \\ \hline
\multicolumn{1}{c|}{0.00001} & \multicolumn{1}{c|}{3.43E-04}     & 2.47E-02 & \multicolumn{1}{c|}{2.83E-01}     & 1.29 \\ \hline
\multicolumn{1}{c|}{0.0001}  & \multicolumn{1}{c|}{3.40E-04}     & 7.02E-04 & \multicolumn{1}{c|}{2.78E-01}     & 4.62E-01 \\ \hline
\multicolumn{1}{c|}{0.001}   & \multicolumn{1}{c|}{1.31E-04}     & 1.33E-04 & \multicolumn{1}{c|}{1.37E-01}     & 1.37E-01 \\ \hline
\multicolumn{1}{c|}{0.01}    & \multicolumn{1}{c|}{6.99E-07}     & 1.17E-04 & \multicolumn{1}{c|}{2.31E-03}     & 2.10E-01 \\ \hline
\multicolumn{1}{c|}{0.1}     & \multicolumn{1}{c|}{4.07E-11}     & 2.27E-05 & \multicolumn{1}{c|}{1.35E-05}     & 6.16E-01 \\ \hline
\end{tabular}
\end{center}
\caption{Maximum \(L^2\) and energy errors for the standard POD ROM versus the DDQ POD ROM}
\label{tab:5.6}
\end{table}

Interestingly, the two POD methods seem to swap behavior between the damping types. The standard method is very consistent as \(D\) increases while it gets much more accurate as \(G\) increases. Comparatively, the DDQ approach improves when \(D\)
increases and stays much more stable when \(G\) increases

It is clear however that for both damping types, the Standard POD ROM is equivalent or better than the DDQ POD ROM in almost all cases. This pattern continued when more basis functions or less basis functions were included.

The following graphs give a visual interpretation for some of these errors. The solid, dashed, and dotted lines represent the FE solution at times \(t = 0, 5, 10\), respectively. For the POD ROM solution we use \(*, +, \text{ and } \times\) for \(t = 0, 5, 10\), respectively. We use \(r = 10\) and \(r = 20\) when \(D = 0.1\) and \(r = 5\) and \(r = 10\) when \(G = 0.001\). Each set of of \(r\) values yields a good comparison between how effective the two methods of POD are. For each type of damping, the smaller \(r\) value yields a very inaccurate DDQ POD ROM whereas the Standard POD ROM is significantly better at that \(r\) value. This can also be seen in the error comparisons done in Tables \ref{tab:5.5} and \ref{tab:5.6}. The standard POD ROM error is always better than the DDQ POD ROM error for the viscous damping. The Kelvin-Voigt damping has similar performance for the two methods when \(r = 10\). Visually, the two methods show no difference when we go to the larger \(r\) value for each damping type. This visual confirmation of the performance of POD is interesting. We are able to represent the solution to the problem where we use \(400\) FE nodes with only \(20\) POD modes for the viscous damping and \(10\) POD modes for the Kelvin-Voigt damping demonstrating the efficiency of POD. 

The behavior of the solutions over time also provides information on why the Kelvin-Voigt damping parameter yields better results at smaller \(r\) values. The Kelvin-Voigt damping causes high frequency oscillations to die out significantly faster making it easier to represent the data over time. On the other hand, the viscous damping only causes the amplitude of the oscillation to decrease over time.

\begin{figure}[htb]
\centering
\begin{subfigure}{.5\textwidth}
  \centering
  \includegraphics[width=1\linewidth]{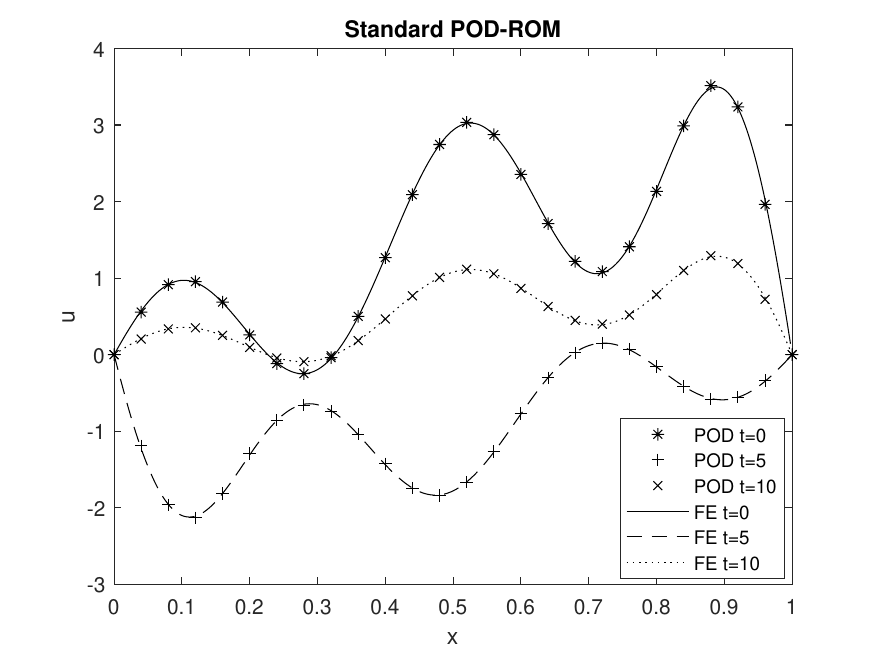}
  \caption{Standard POD ROM versus FE solution}
  \label{fig:sub1}
\end{subfigure}%
\begin{subfigure}{.5\textwidth}
  \centering
  \includegraphics[width=1\linewidth]{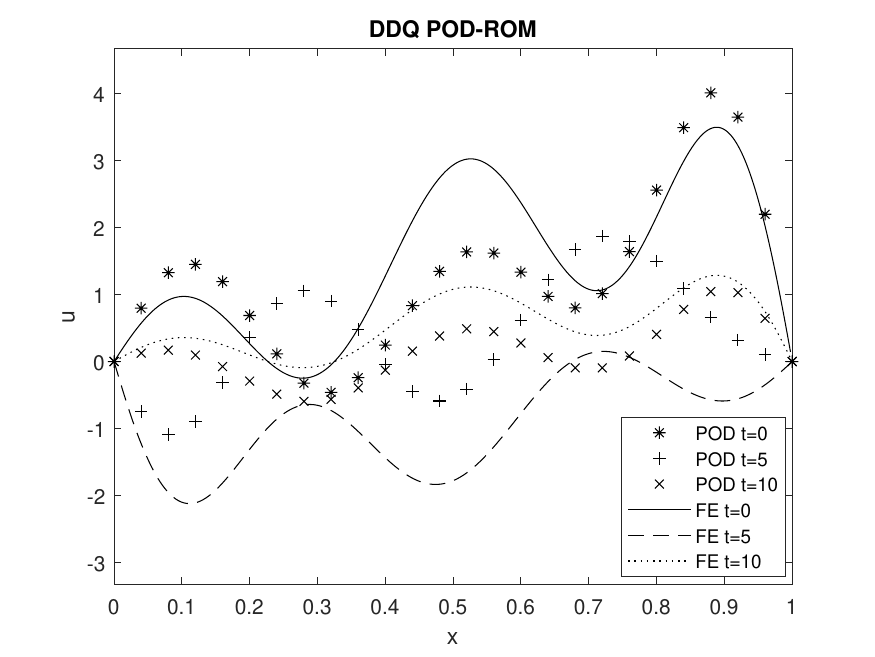}
  \caption{DDQ POD ROM versus FE solution}
  \label{fig:sub2}
\end{subfigure}
\caption{POD ROM Plots when \(D = 0.1\), \(G = 0\), and \(r = 10\)}
\label{fig:5.3}
\end{figure}
\begin{figure}[htb]
\centering
\begin{subfigure}{.5\textwidth}
  \centering
  \includegraphics[width=1\linewidth]{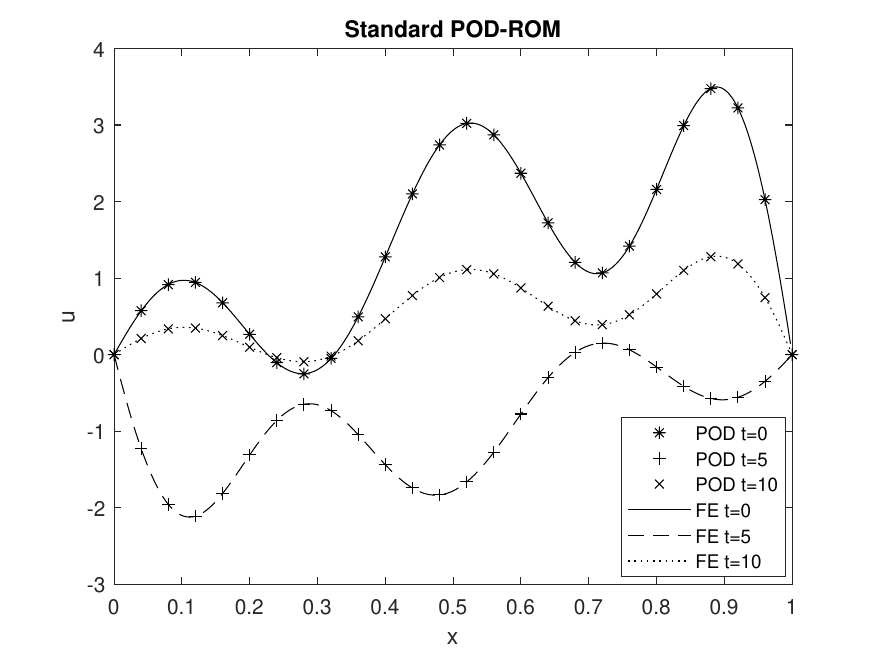}
  \caption{Standard POD ROM versus FE solution}
  \label{fig:sub1}
\end{subfigure}%
\begin{subfigure}{.5\textwidth}
  \centering
  \includegraphics[width=1\linewidth]{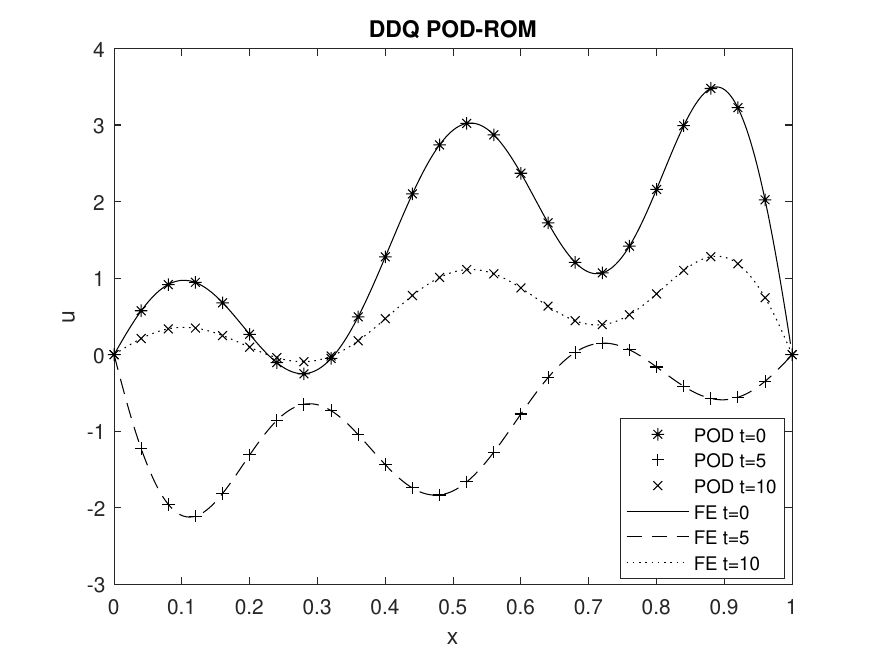}
  \caption{DDQ POD ROM versus FE solution}
  \label{fig:sub2}
\end{subfigure}
\caption{POD ROM Plots when \(D = 0.1\), \(G = 0\), and \(r = 20\)}
\label{fig:5.4}
\end{figure}

In Figure \ref{fig:5.3}, we can see that the Standard POD ROM has a few spots of inaccuracy: specifically the peak on the right for \(t = 0\) and the trough on the left for \(t = 5\). This small error in the ROM is eliminated visually in Figure \ref{fig:5.4} when \(r\) is set to 20.

\begin{figure}[htb]
\centering
\begin{subfigure}{.5\textwidth}
  \centering
  \includegraphics[width=1\linewidth]{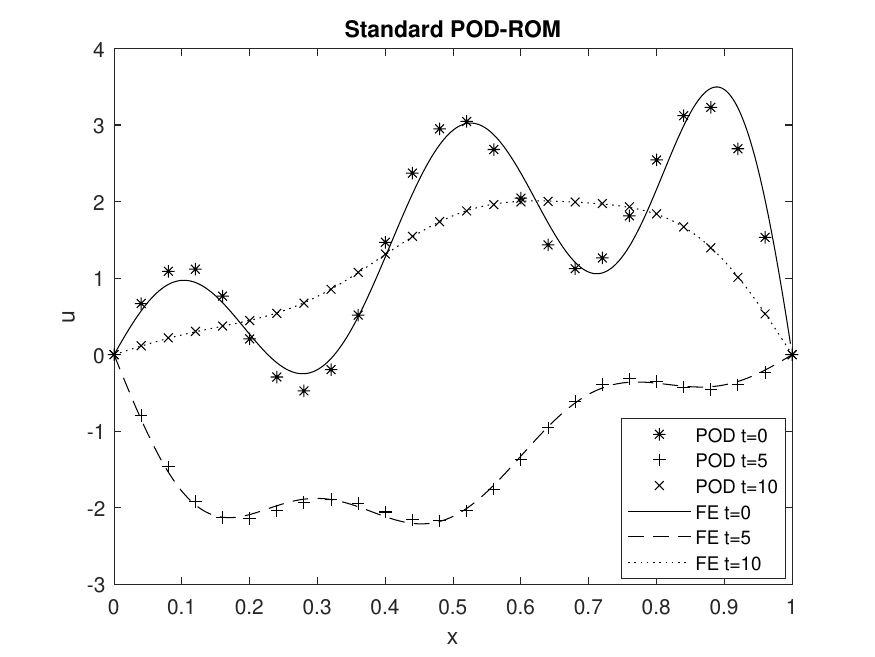}
  \caption{Standard POD ROM versus FE solution}
  \label{fig:sub1}
\end{subfigure}%
\begin{subfigure}{.5\textwidth}
  \centering
  \includegraphics[width=1\linewidth]{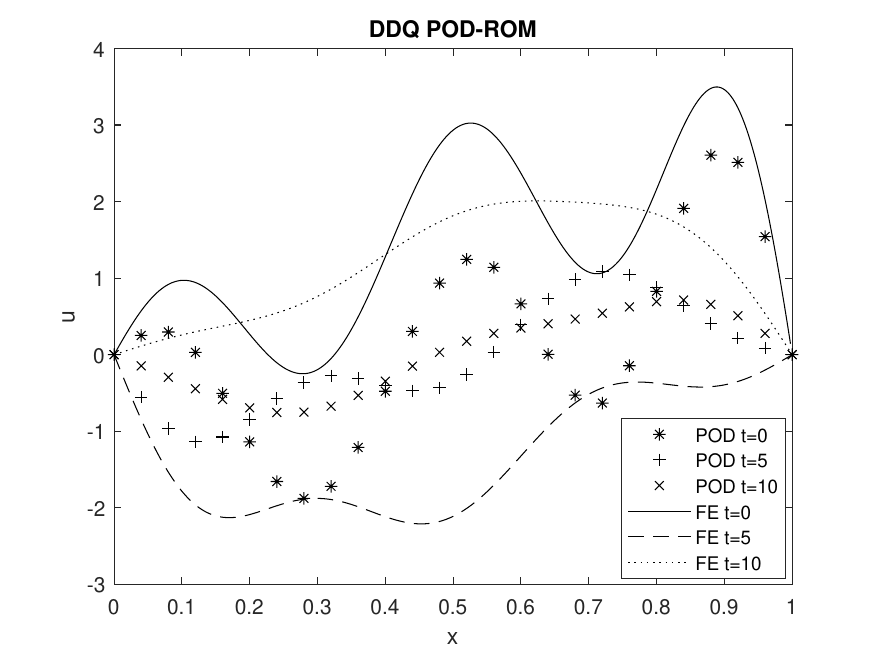}
  \caption{DDQ POD ROM versus FE solution}
  \label{fig:sub2}
\end{subfigure}
\caption{POD ROM Plots when \(D = 0\), \(G = 0.001\), and \(r = 5\)}
\label{fig:5.5}
\end{figure}
\begin{figure}[htb]
\centering
\begin{subfigure}{.5\textwidth}
  \centering
  \includegraphics[width=1\linewidth]{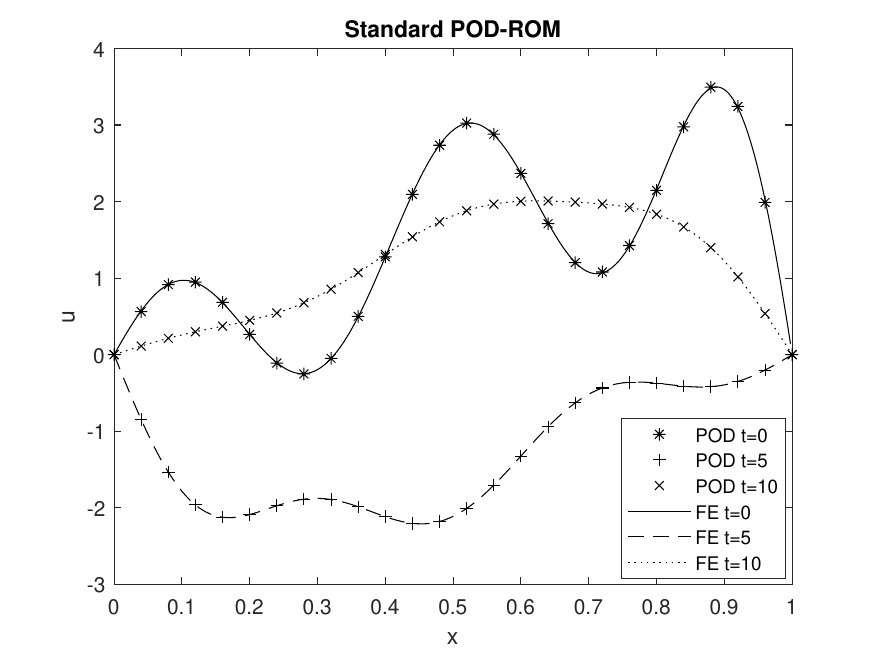}
  \caption{Standard POD ROM versus FE solution}
  \label{fig:sub1}
\end{subfigure}%
\begin{subfigure}{.5\textwidth}
  \centering
  \includegraphics[width=1\linewidth]{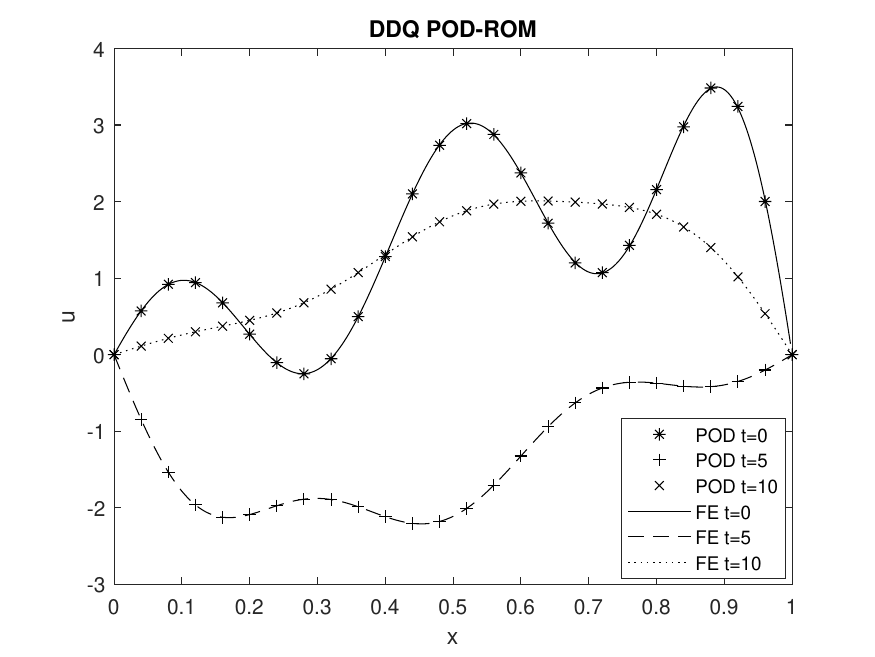}
  \caption{DDQ POD ROM versus FE solution}
  \label{fig:sub2}
\end{subfigure}
\caption{POD ROM Plots when \(D = 0\), \(G = 0.001\), and \(r = 10\)}
\label{fig:5.6}
\end{figure}

We see that for \(r = 5\), in Figure \ref{fig:5.5}, the standard POD method is able to roughly approximate the FE solution while the DDQ method would be unusable as an approximation. Since the Kelvin-Voigt damping causes high frequency oscillations to decay much quicker than low frequency ones, we see in Figure \ref{fig:5.5}(a) that the final time solution is much more accurate than the beginning time solution. This demonstrates the difficulty POD has with many frequencies of oscillation. By \(t = 10\), many of the highest frequencies have died out and POD is able to effectively represent the solution in time. Whereas, at \(t = 0\) when all of the high frequency oscillation is still present, it struggles. It also appears that the DDQ POD ROM benefits from increasing \(r\) more at small values than the standard POD ROM does. We see that for \(G\), both errors are almost the same for both methods when \(r=10\) in Figure \ref{fig:5.6}. This is clearly not true when \(r = 5\). In other exploratory computations, this same pattern was seen most often for the Kelvin-Voigt damping.

\subsubsection{Reduced Training Interval Exploration}\label{sec:5_ReducedInterval}

Next, we reduce the amount of training data POD receives when simulating over the same test interval. We do this by choosing the first \(m\) snapshots up to time \(t = T_t\) where \(T_t\) is the training time and \(m\) varies depending on the length of the interval. This is of interest as one of the major applications of a ROM is to simulate into the future based on a short period of high accuracy simulation. It is important to note that there are no theoretical foundations for these explorations.

We chose four different size training intervals: \([0,10], \ [0,5], \ [0,1]\), and \([0,.5]\). This means we include \(8001,\ 4001,\ 801\), and \(401\) snapshots respectively for each simulation. We keep the number of FE nodes at \(400\) and \(\Delta t = \frac{1}{800}\) for these tests. 
The results for each damping parameter are presented in Tables \ref{tab:5.9} and \ref{tab:5.10}.

\begin{table}[htb]
  \begin{center}       
       \begin{tabular}{c|c|c}
 \hline     Training Interval   & Standard POD \(L^2\) Error & DDQ POD \(L^2\) Error \\ \hline
\([0,10]\) & 6.04E-07  & 8.59E-07 \\ \hline
\([0,5]\)  & 6.03E-07  & 5.09E-06 \\ \hline
\([0,1]\)  & 6.40E-07  & 1.07 \\ \hline \([0,.5]\) & 3.20E-01 & 2.47E-01    \\ \hline
       \end{tabular}
  \end{center}
  \caption{Final Time \(L^2\) Error for Different Training Intervals when \(D = 0.1\) and \(r = 20\).}
\label{tab:5.9}
\end{table} 
\begin{table}[htb]
  \begin{center}       
       \begin{tabular}{c|c|c}
 \hline     Training Interval   & Standard POD \(L^2\) Error & DDQ POD \(L^2\) Error \\ \hline
\([0,10]\) & 2.31E-12  & 1.04E-07 \\ \hline
\([0,5]\)  & 5.15E-12  & 4.94E-07 \\ \hline
\([0,1]\)  & 6.73E-11  & 1.25E-02 \\ \hline
\([0,.5]\) & 5.99E-02  & 4.49      \\ \hline
       \end{tabular}
  \end{center}
  \caption{Final Time \(L^2\) Error for Different Training Intervals when \(G = 0.001\) and \(r = 20\).}
\label{tab:5.10}
\end{table}

Both sets of data seem to indicate that for Standard POD there is a point somewhere between \(1/10\)th and \(1/20\)th of the main interval that the accuracy breaks down. The stability of the final time error is interesting for both cases as we are not only taking a shorter time interval we are also reducing the number of snapshots. 

This is not the case for the DDQ POD. The breakdown seems to occur at some point between \(T = 1 \) and \(T = 5\). More computations would be needed to have a better idea of the time which DDQ POD begins to struggle. 

\section{Conclusions}

%The standard POD method takes a data set of snapshots and forms the optimal POD basis from this data set; however, this approach does not have pointwise data approximation error bounds. The DQ POD method recently proposed in \cite{Sarahs} uses a single snapshot and all of the difference quotients for the data and forms the POD basis off of this new data set. The DQ approach does have pointwise data approximation error bounds and is useful for analyzing 1st order PDE systems.% We present these results as motivation for the new method we develop.

We extended the DQ POD method proposed in \cite{Sarahs} to second difference quotients (DDQ) and proved data error formulas and pointwise data approximation error bounds. The POD data set for this method does not contain any redundant data and consists of one snapshot, one difference quotient, and all the second difference quotients. We considered the damped wave equation with viscous damping and Kelvin-Voigt damping to analyze the ROM errors using this DDQ POD method. Pointwise error bounds were developed when at least one damping parameter is nonzero.

We presented computational results for both standard POD and DDQ POD and the two types of damping. We presented results on the POD singular values and data error formulas for both methods and both types of damping. We gave data on the POD ROM maximum energy and pointwise errors for each damping parameter over a range of possible values. All computational results for the DDQ POD method followed the new theoretical results from this work.

The standard POD method was more accurate that the DDQ POD method in almost every numerical test; however, we do not have theoretical guarantees for the pointwise errors for the standard POD method. Preliminary experimentation inspired by \cite{Herkt} where alternative weights were used in the DDQ POD approach increased the accuracy of the method. More research in is needed to understand these observations. 

Finally, we explored using POD to simulate into the future. We compared using smaller test intervals to simulate across the entire interval of interest for each method of POD. The standard method once again performed better in this direction but the DDQ method was not far behind in performance. More work in this direction would be interesting and is left to be explored elsewhere. 

\section*{Acknowledgements}
  The authors thank the National Science Foundation (NSF) for providing support under grant number
2111421.

\bibliographystyle{plain}
%\bibliography{references_POD_approx,pod_differencequotients,POD_theory_new_references,POD_applications}
\bibliography{SinglerJanes_PO_DDQ_preprint}

\begin{thebibliography}{10}

\bibitem{Alla}
A.~Alla, M.~Falcone, and S.~Volkwein.
\newblock Error analysis for {POD} approximations of infinite horizon problems
  via the dynamic programming approach.
\newblock {\em SIAM Journal on Control and Optimization}, 55(5):3091--3115,
  2017.

\bibitem{Amsallem2}
D.~Amsallem and U.~Hetmaniuk.
\newblock Error estimates for {G}alerkin reduced-order models of the
  semi-discrete wave equation.
\newblock {\em ESAIM Math. Model. Numer. Anal.}, 48(1):135--163, 2014.

\bibitem{Amsallem1}
David Amsallem, Matthew Zahr, Youngsoo Choi, and Charbel Farhat.
\newblock Design optimization using hyper-reduced-order models.
\newblock {\em Struct. Multidiscip. Optim.}, 51(4):919--940, 2015.

\bibitem{Balajewicz}
Maciej Balajewicz, David Amsallem, and Charbel Farhat.
\newblock Projection-based model reduction for contact problems.
\newblock {\em Internat. J. Numer. Methods Engrg.}, 106(8):644--663, 2016.

\bibitem{Batten}
Belinda~A. Batten, Hesam Shoori, John~R. Singler, and Madhuka~H. Weerasinghe.
\newblock Balanced truncation model reduction of a nonlinear cable-mass {PDE}
  system with interior damping.
\newblock {\em Discrete Contin. Dyn. Syst. Ser. B}, 24(1):83--107, 2019.

\bibitem{Bergmann2}
M.~Bergmann and L.~Cordier.
\newblock Optimal control of the cylinder wake in the laminar regime by
  trust-region methods and {POD} reduced-order models.
\newblock {\em J. Comput. Phys.}, 227(16):7813--7840, 2008.

\bibitem{Bergmann1}
Michel Bergmann, Laurent Cordier, and Jean-Pierre Brancher.
\newblock {Optimal rotary control of the cylinder wake using proper orthogonal
  decomposition reduced-order model}.
\newblock {\em Physics of Fluids}, 17(9):097101, 08 2005.

\bibitem{Dupont}
Todd Dupont.
\newblock {$L\sp{2}$}-estimates for {G}alerkin methods for second order
  hyperbolic equations.
\newblock {\em SIAM J. Numer. Anal.}, 10:880--889, 1973.

\bibitem{Sarahs}
Sarah~Locke Eskew and John~R. Singler.
\newblock A new approach to proper orthogonal decomposition with difference
  quotients.
\newblock {\em Adv. Comput. Math.}, 49(2):Paper No. 13, 33, 2023.

\bibitem{Fareed}
Hiba Fareed, John~R. Singler, Yangwen Zhang, and Jiguang Shen.
\newblock Incremental proper orthogonal decomposition for {PDE} simulation
  data.
\newblock {\em Comput. Math. Appl.}, 75(6):1942--1960, 2018.

\bibitem{garcia3}
Bosco Garc\'{\i}a-Archilla, Volker John, Sarah Katz, and Julia Novo.
\newblock {POD-ROMs} for incompressible flows including snapshots of the
  temporal derivative of the full order solution: Error bounds for the
  pressure.
\newblock 2023.
\newblock arXiv:2304.08313.

\bibitem{garcia2}
Bosco Garc\'{\i}a-Archilla, Volker John, and Julia Novo.
\newblock {POD-ROM}s for incompressible flows including snapshots of the
  temporal derivative of the full order solution.
\newblock {\em SIAM Journal on Numerical Analysis}, 61(3):1340--1368, 2023.

\bibitem{garcia1}
Bosco Garc\'{\i}a-Archilla, Volker John, and Julia Novo.
\newblock Second order error bounds for {POD-ROM} methods based on first order
  divided differences.
\newblock 2023.
\newblock arXiv:2306.03550.

\bibitem{garcia4}
Bosco Garc\'{\i}a-Archilla, Julia Novo, and Samuele Rubino.
\newblock Error analysis of proper orthogonal decomposition data assimilation
  schemes with grad-div stabilization for the {N}avier-{S}tokes equations.
\newblock {\em J. Comput. Appl. Math.}, 411:Paper No. 114246, 30, 2022.

\bibitem{Georgiou}
Ioannis Georgiou.
\newblock Advanced proper orthogonal decomposition tools: using reduced order
  models to identify normal modes of vibration and slow invariant manifolds in
  the dynamics of planar nonlinear rods.
\newblock {\em Nonlinear Dynam.}, 41(1-3):69--110, 2005.

\bibitem{Gras2}
Carmen Gr\"{a}{\ss}le, Michael Hinterm\"{u}ller, Michael Hinze, and Tobias
  Keil.
\newblock Simulation and control of a nonsmooth {C}ahn-{H}illiard
  {N}avier-{S}tokes system with variable fluid densities.
\newblock In {\em Non-smooth and complementarity-based distributed parameter
  systems---simulation and hierarchical optimization}, volume 172 of {\em
  Internat. Ser. Numer. Math.}, pages 211--240. Birkh\"{a}user/Springer, Cham,
  2022.

\bibitem{Gras1}
Carmen Gr\"{a}{\ss}le, Michael Hinze, Jens Lang, and Sebastian Ullmann.
\newblock P{OD} model order reduction with space-adapted snapshots for
  incompressible flows.
\newblock {\em Adv. Comput. Math.}, 45(5-6):2401--2428, 2019.

\bibitem{Herkt}
Sabrina Herkt, Michael Hinze, and Rene Pinnau.
\newblock Convergence analysis of {G}alerkin {POD} for linear second order
  evolution equations.
\newblock {\em Electron. Trans. Numer. Anal.}, 40:321--337, 2013.

\bibitem{Iliescu}
Traian Iliescu and Zhu Wang.
\newblock Are the snapshot difference quotients needed in the proper orthogonal
  decomposition?
\newblock {\em SIAM J. Sci. Comput.}, 36(3):A1221--A1250, 2014.

\bibitem{Koc2}
Birgul Koc, Tom\'{a}s Chac\'{o}n~Rebollo, and Samuele Rubino.
\newblock Uniform bounds with difference quotients for proper orthogonal
  decomposition reduced order models of the {B}urgers equation.
\newblock {\em J. Sci. Comput.}, 95(2):Paper No. 43, 27, 2023.

\bibitem{Koc1}
Birgul Koc, Samuele Rubino, Michael Schneier, John Singler, and Traian Iliescu.
\newblock On optimal pointwise in time error bounds and difference quotients
  for the proper orthogonal decomposition.
\newblock {\em SIAM J. Numer. Anal.}, 59(4):2163--2196, 2021.

\bibitem{Kostova2018}
Tanya Kostova-Vassilevska and Geoffrey~M. Oxberry.
\newblock Model reduction of dynamical systems by proper orthogonal
  decomposition: error bounds and comparison of methods using snapshots from
  the solution and the time derivatives.
\newblock {\em J. Comput. Appl. Math.}, 330:553--573, 2018.

\bibitem{kunisch2}
K.~Kunisch and S.~Volkwein.
\newblock Galerkin proper orthogonal decomposition methods for parabolic
  problems.
\newblock {\em Numer. Math.}, 90(1):117--148, 2001.

\bibitem{Kunisch}
K.~Kunisch and S.~Volkwein.
\newblock Galerkin proper orthogonal decomposition methods for a general
  equation in fluid dynamics.
\newblock {\em SIAM J. Numer. Anal.}, 40(2):492--515, 2002.

\bibitem{Lee}
Hyung-Chun Lee, Sung-Whan Lee, and Guang-Ri Piao.
\newblock Reduced-order modeling of {B}urgers equations based on centroidal
  {V}oronoi tessellation.
\newblock {\em Int. J. Numer. Anal. Model.}, 4(3-4):559--583, 2007.

\bibitem{Ma2020}
Dingjiong Ma, Wai-ki Ching, and Zhiwen Zhang.
\newblock Proper orthogonal decomposition method for multiscale elliptic {PDE}s
  with random coefficients.
\newblock {\em J. Comput. Appl. Math.}, 370:112635, 19, 2020.

\bibitem{Rehm}
Amanda~M. Rehm, Elizabeth~Y. Scribner, and Hassan~M. Fathallah-Shaykh.
\newblock Proper orthogonal decomposition for parameter estimation in
  oscillating biological networks.
\newblock {\em J. Comput. Appl. Math.}, 258:135--150, 2014.

\bibitem{Reyes}
R.~Reyes, O.~Ruz, C.~Bayona-Roa, E.~Castillo, and A.~Tello.
\newblock Reduced order modeling for parametrized generalized {N}ewtonian fluid
  flows.
\newblock {\em J. Comput. Phys.}, 484:Paper No. 112086, 20, 2023.

\bibitem{Singler}
John~R. Singler.
\newblock New {POD} error expressions, error bounds, and asymptotic results for
  reduced order models of parabolic {PDE}s.
\newblock {\em SIAM J. Numer. Anal.}, 52(2):852--876, 2014.

\bibitem{Sun}
Xian-hang Sun and Ming-hai Xu.
\newblock Optimal control of water flooding reservoir using proper orthogonal
  decomposition.
\newblock {\em J. Comput. Appl. Math.}, 320:120--137, 2017.

\bibitem{Sun2021}
Xiang Sun, Xiaomin Pan, and Jung-Il Choi.
\newblock Non-intrusive framework of reduced-order modeling based on proper
  orthogonal decomposition and polynomial chaos expansion.
\newblock {\em J. Comput. Appl. Math.}, 390:Paper No. 113372, 22, 2021.

\bibitem{Xie2018}
Xuping Xie, David Wells, Zhu Wang, and Traian Iliescu.
\newblock Numerical analysis of the {L}eray reduced order model.
\newblock {\em J. Comput. Appl. Math.}, 328:12--29, 2018.

\end{thebibliography}
\end{document}